\newtheorem{definition}{Definition}[section]
\newtheorem{theorem}[definition]{Theorem}
\newtheorem{lemma}[definition]{Lemma}
\newtheorem{example}[definition]{Example}
\newtheorem{proposition}[definition]{Proposition}
\title{Some families of operator norm inequalities}
\author{Imam Nugraha Albania and Masaru Nagisa}
\begin{document}
\maketitle

\begin{abstract}
We consider the function $ f_{\alpha, \beta}(t)=t^{\gamma(\alpha,\beta)}\prod_{i=1}^n \frac{b_i(t^{a_i}-1)}{a_i(t^{b_i}-1)}$ on the interval $(0,\infty)$,
where $\alpha=(a_1,a_2,\ldots,a_n), \beta=(b_1,b_2,\ldots,b_n)\in \mathbb{R}^n$ and $\gamma(\alpha,\beta) = (1-\sum_{i=1}^n(a_i-b_i))/2$.
In \cite{hiaikosaki}, Hiai and Kosaki define the relation $\preceq $ using positive definiteness  for functions $f$ and $g$ with some suitable conditions and
they have proved this relation implies the operator norm inequality associated  with functions $f$ and $g$.
In this paper, we give some conditions for $\alpha', \beta'\in \mathbb{R}^m$ to hold the relation $f_{\alpha,\beta}(t) \preceq f_{\alpha', \beta'}(t)$.
\end{abstract}

\section{Introduction}
When $f: (0,\infty)\longrightarrow (0,\infty)$ is continuous and satisfies $f(1)=1$,
we denote $f \in C(0,\infty)^+_1$.
We call $f\in C(0,\infty)^+_1$ symmetric if it holds $f(t)=tf(1/t)$.
For $f, g\in C(0,\infty)^+_1$, we define  $f \preceq g$ if the function
\[   \mathbb{R} \ni x \mapsto \frac{f(e^x)}{g(e^x)}  \]
is positive definite, 
where a function $\varphi : \mathbb{R} \longrightarrow \mathbb{C}$ is positive definite means that, for any positive integer $n$ and real numbers $x_1, x_2,\ldots, x_n$,
the $n\times n$ matrix  $[ \varphi (x_i - x_j)]_{i,j=1}^n$ is positive definite, i.e.,
\[   \sum_{i,j=1}^n \alpha_i \overline{\alpha_j} \varphi(x_i-x_j) \ge 0 \]
for any $\alpha_1, \alpha_2, \ldots, \alpha_n \in \mathbb{C}$.  
For $f\in C(0,\infty)^+_1$, we define a continuous map 
$M_f: (0,\infty)\times (0,\infty) \longrightarrow (0,\infty)$ as follows:
\[    M_f(s, t) = t f(\frac{s}{t}).  \]
Then it holds that $M_f(1,1)=1$, $M_f(\alpha s, \alpha t) = \alpha M_f(s , t)$  $(\alpha >0)$ and
\[   M_f(s,t) =M_f(t,s)   \]
if $f$ is symmetric.

We define the inner product $\langle \cdot, \cdot \rangle$ on $\mathbb{M}_N(\mathbb{C})$ by $\langle X, Y \rangle = {\rm Tr} (Y^*X)$
for $X, Y\in \mathbb{M}_N(\mathbb{C})$.
When $A\in\mathbb{M}_N(\mathbb{C})$, we can define bounded linear operator $L_A$ and $R_A$ on the Hilbert space 
$(\mathbb{M}_N(\mathbb{C}), \langle \cdot,\cdot \rangle)$ as follows:
\[  L_A(X) =AX, \quad R_A(X) = XA \quad \text{ for } X\in \mathbb{M}_N(\mathbb{C}). \]
If both $H$ and  $K$ are positive, invertible matrix in $\mathbb{M}_N(\mathbb{C})$ (in short, $H, K>0$), 
then $L_H$ and $R_K$ are also positive, invertible operators on $(\mathbb{M}_N(\mathbb{C}), \langle \cdot,\cdot \rangle)$
and satisfy the relation $L_HR_K = R_KL_H$.
Using continuous function calculus of operators, we can consider the operator 
$M_f(L_H, R_K) (= R_K f(L_HR_K^{-1}))$ on $(\mathbb{M}_N(\mathbb{C}), \langle \cdot,\cdot \rangle)$.

In \cite{hiaikosaki},  F. Hiai and H. Kosaki has given the following equivalent conditions for 
$f,g \in C(0,\infty)_1^+$ satisfying the symmetric condition:
\begin{enumerate}
  \item[(1)]  there exists a symmetric probability measure $\nu$ on $\mathbb{R}$ such that
\[  M_f(L_H, R_K)X = \int_{-\infty}^\infty H^{is}( M_g(L_H, R_K)X)K^{-is}d\nu(s)  \]
for all $H,K,X\in \mathbb{M}_N(\mathbb{C})$ with $H,K>0$.
  \item[(2)] $||| M_f(L_H, R_K)X ||| \le ||| M_g(L_H, R_K)X ||| $ for all $H,K,X\in \mathbb{M}_N(\mathbb{C})$ with $H,K>0$
and  any unitarily invariant norm $|||\cdot |||$, which means $||| UX ||| = |||X||| = |||XU|||$ 
for any unitary $U\in \mathbb{M}_N(\mathbb{C})$ and any matrix $X\in \mathbb{M}_N(\mathbb{C})$.
  \item[(3)] $\| M_f(L_H, R_H)X  \| \le  \| M_g(L_H, R_H)X  \| $ for all $H,X\in \mathbb{M}_N(\mathbb{C})$ with $H>0$ and the usual operator norm
$\|\cdot\|$ on $\mathbb{M}_N(\mathbb{C})$.
  \item[(4)] $f \preceq g$.
\end{enumerate}
They also proved that, for a family of symmetric functions $f_a(t) = \frac{a-1}{a}\frac{t^a-1}{t^{a-1}-1} \in C(0,\infty)^+_1$ $(a\in \mathbb{R})$, 
\[  -\infty \le a < b \le \infty \; \Rightarrow \; f_a\preceq f_b  .  \]
As an example, $f_{1/2} \preceq f_2$  implies 
\[   ||| M_{f_{1/2}}(L_H, R_K)X ||| \le  ||| M_{f_2}(L_H, R_K)X ||| .  \]
So we can get the arithmetic-geometric mean inequality
\[   |||H^{1/2}XK^{1/2} ||| \le \frac{1}{2}||| HX+XK |||,  \]
because $ M_{f_{1/2}}(s,t) =s^{1/2}t^{1/2}$ and  $M_{f_2}(s,t) =(s+t)/2$.
This is known as McIntosh's inequality \cite{mcintosh}.

In this paper, we consider the following function:
\[   f_{\alpha, \beta}(t) = t^{\gamma(\alpha, \beta)} 
                   \prod_{i=1}^n \frac{b_i (t^{a_i}-1)}{a_i (t^{b_i}-1)}  \]
for $\alpha=(a_1,a_2,\ldots, a_n)$, $\beta=(b_1,b_2,\ldots, b_n)\in \mathbb{R}^n$
and $\gamma(\alpha,\beta)=(1-\sum_{i=1}^n (a_i -b_i) )/2$.
Under some condition, the second-named author investigated their operator monotonicity in \cite{nagisawada}.
The function $f_{\alpha,\beta} \in C(0,\infty)^+_1$ is an extension of  functions $\{f_a:a\in \mathbb{R} \}$ in some sense and satisfies the symmetric condition.
We also set
\[  M_{\alpha, \beta} (s, t) = t f_{\alpha,\beta}(\frac{s}{t} ) .  \]
For $\alpha=(a_1,a_2,\ldots, a_n)$, $\beta=(b_1,b_2,\ldots, b_n)\in \mathbb{R}^n$, we define the relation $|\alpha| \preceq_w |\beta|$ as follows:
\begin{gather*}
  |a_{\sigma (1)}| \ge |a_{\sigma (2)}| \ge \cdots \ge |a_{\sigma (n)}|, \; 
  |b_{\tau (1)}| \ge |b_{\tau (2)}| \ge \cdots \ge |b_{\tau (n)}|  \\
  \text{and } \; \sum_{i=1}^k |a_{\sigma (i)}| \le \sum_{i=1}^k |b_{\tau (i)}| \qquad  (k=1,2,\cdots, n)
\end{gather*}
for some permutations $\sigma, \tau$ on $\{1,2,\ldots, n\}$,
where we denote $(|a_1|,|a_2|,\ldots, |a_n|)$ by $|\alpha|$.
In this case we call that $|\beta|$ weakly submajorises $|\alpha|$.

Our main result is as follows:

\vspace{5mm}

\begin{theorem}
Let
\[  f_{\alpha,\beta}(t) = t^{\gamma(\alpha, \beta)} 
                   \prod_{i=1}^n \frac{b_i (t^{a_i}-1)}{a_i (t^{b_i}-1)} , \quad
    f_{\alpha', \beta'} = t^{\gamma(\alpha', \beta')}
                   \prod_{i=1}^m \frac{d_i (t^{c_i}-1)}{c_i (t^{d_i}-1)} , \]
where $\gamma(\alpha,\beta) = (1-\sum_{i=1}^n (a_i-b_i) )/2$,
$\gamma(\alpha',\beta') = (1-\sum_{i=1}^m (c_i-d_i) )/2$.
If  $|(b_1,\ldots, b_n,c_1,\ldots c_m)|$ weakly submajorises $|(a_1,\ldots,a_n,d_1,\ldots, d_m)|$,
then we have  $f_{\alpha,\beta} \preceq f_{\alpha',\beta'}$, that is,
\[   ||| M_{\alpha, \beta}(L_H, R_K)X ||| \le ||| M_{\alpha', \beta'}(L_H, R_K)X |||   \]
for any  $H,K \in \mathbb{M}_N(\mathbb{C})$ with $H,K>0$
and any matrix $X\in \mathbb{M}_N(\mathbb{C})$.
\end{theorem}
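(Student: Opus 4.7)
The strategy is to invoke the Hiai–Kosaki equivalence (1)$\Leftrightarrow$(4) recalled in Section~1: it suffices to prove that $x\mapsto f_{\alpha,\beta}(e^x)/f_{\alpha',\beta'}(e^x)$ is a positive definite function on $\mathbb{R}$.

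First, I would set $t=e^x$ and use the identity $t^{a}-1=2e^{ax/2}\sinh(ax/2)$ to rewrite each factor $b_i(t^{a_i}-1)/[a_i(t^{b_i}-1)]$ appearing in $f_{\alpha,\beta}$ as $(b_i/a_i)\,e^{(a_i-b_i)x/2}\sinh(a_ix/2)/\sinh(b_ix/2)$, and each factor $d_j(t^{c_j}-1)/[c_j(t^{d_j}-1)]$ appearing in $1/f_{\alpha',\beta'}$ as $(c_j/d_j)\,e^{(d_j-c_j)x/2}\sinh(d_jx/2)/\sinh(c_jx/2)$. The definitions of $\gamma(\alpha,\beta)$ and $\gamma(\alpha',\beta')$ are arranged precisely so that the overall prefactor $t^{\gamma(\alpha,\beta)-\gamma(\alpha',\beta')}$ cancels all accumulated exponentials, leaving
\[
\frac{f_{\alpha,\beta}(e^x)}{f_{\alpha',\beta'}(e^x)} \;=\; \prod_{i=1}^n\frac{b_i\sinh(a_ix/2)}{a_i\sinh(b_ix/2)}\;\cdot\;\prod_{j=1}^m\frac{c_j\sinh(d_jx/2)}{d_j\sinh(c_jx/2)}.
\]
Because $\sinh$ is odd, each factor is invariant under sign flip of its parameter, so the theorem reduces to the following claim: if the nonnegative vector $|v|=(|b_1|,\ldots,|b_n|,|c_1|,\ldots,|c_m|)$ weakly submajorises $|u|=(|a_1|,\ldots,|a_n|,|d_1|,\ldots,|d_m|)$, then the corresponding product of normalised $\sinh$-ratios is positive definite on $\mathbb{R}$.

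Next I would establish two positive-definiteness building blocks. \emph{Lemma (i):} for $0\le a\le b$, the function $(b/a)\sinh(ax/2)/\sinh(bx/2)$ is positive definite on $\mathbb{R}$; this is classical and follows from an explicit integral representation with a positive Fourier-side measure. \emph{Lemma (ii):} for nonnegative $(a_1,a_2)$ ordinary-majorised by $(b_1,b_2)$, the normalised product $\sinh(a_1x/2)\sinh(a_2x/2)/[\sinh(b_1x/2)\sinh(b_2x/2)]$ is positive definite. Lemma~(ii) is the technical heart.

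Finally, combining these with the weak-submajorisation hypothesis proceeds in two stages. First, I would invoke the standard majorisation fact that $|v|$ weakly submajorising $|u|$ implies the existence of an intermediate $|u'|\ge|u|$ coordinatewise with $|u'|\prec|v|$ (ordinary majorisation); the excess of each $|u'_k|$ over $|u_k|$ contributes a factor $(|u'_k|/|u_k|)\sinh(|u_k|x/2)/\sinh(|u'_k|x/2)$, which is positive definite by Lemma~(i). Second, by Hardy–Littlewood–Pólya, the ordinary majorisation $|u'|\prec|v|$ is realised by a finite chain of $T$-transforms, each modifying only two coordinates and producing a positive definite factor by Lemma~(ii). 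Since the pointwise product of positive definite functions is positive definite, the conclusion follows. The main obstacle I anticipate is Lemma~(ii): a clean proof of positive-definiteness for the two-variable $\sinh$-ratio under majorisation requires a careful Fourier-side or integral-representation computation, and without it the whole $T$-transform reduction does not close.
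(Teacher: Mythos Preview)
Your proposal is correct and follows essentially the same route as the paper: reduce $f_{\alpha,\beta}(e^x)/f_{\alpha',\beta'}(e^x)$ to a product of $\sinh$-ratios (the paper does the identical computation with $t=e^{2x}$), reduce to nonnegative parameters, and then build positive definiteness from two ingredients---the single ratio $\sinh(ax)/\sinh(bx)$ for $a\le b$ (your Lemma~(i), the paper's Lemma~2.2) and the two-term ratio under ordinary majorisation (your Lemma~(ii), the paper's Lemma~2.3).

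The one genuine organisational difference is in how the ingredients are assembled. The paper proves a single statement (Theorem~2.8) by induction on the number of factors: at each step it either has $a_i\le b_i$ for all $i$ (done by Lemma~2.2) or locates the first index $j$ with $a_j>b_j$ and peels off one Lemma~2.3 factor, reducing to $n$ terms. You instead invoke the standard majorisation facts that $|u|\preceq_w |v|$ factors as a coordinatewise increase (handled by Lemma~(i)) followed by ordinary majorisation, and that ordinary majorisation decomposes as a chain of $T$-transforms (each handled by Lemma~(ii)). The two decompositions are equivalent in content; yours is more modular and appeals to off-the-shelf majorisation theory, while the paper's induction is self-contained and, because Lemmas~2.2 and~2.3 give infinite divisibility, actually yields the stronger conclusion that the $\sinh$-product is infinitely divisible rather than merely positive definite. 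Your anticipated obstacle, Lemma~(ii), is exactly what Lemma~2.3 supplies: the paper obtains it by a linear change of variable reducing to Kosaki's result on the infinite divisibility of $\dfrac{\sinh(\alpha x)\sinh((\beta-1)x)}{\sinh((\alpha-1)x)\sinh(\beta x)}$ for $\beta\ge\alpha$, proved via an explicit Kolmogorov-type integral representation.
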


We can get an operator norm inequality for a pair of sequences of positive numbers
if one sequence is weakly submajorise the other one.
In a special case, we can completely determine the condition to get the related operator norm inequality.

\vspace{5mm}

\begin{theorem}
Let $a,b,c,d  \ge 0$ and set
\[  f_{a,b}(t) = t^{(1-a+b)/2}\frac{b(t^a-1)}{a(t^b-1)}, \;
    f_{c,d}(t) = t^{(1-c+d)/2}\frac{d(t^c-1)}{c(t^d-1)}.  \]
\begin{enumerate}
\item[(1)] When $a\ge b$, $f_{a,b}\preceq f_{c,d}$ is equivalent to
\[  (c,d) \in \{ (x,y) : x\ge a, \; 0\le y \le x-a+b  \} .  \]
\item[(2)] When $a<b$, $f_{a,b}\preceq f_{c,d}$ is equivalent to
\[  (c,d) \in \{ (x,y) : 0\le x \le y \le x-a+b, \; y\le b  \} \cup \{ (x,y) : 0 \le y \le x\} .  \]
\end{enumerate}
\end{theorem}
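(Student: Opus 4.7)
The plan is to convert $f_{a,b}\preceq f_{c,d}$ into a positive-definiteness question about a concrete meromorphic function, then combine Theorem~1.2 for sufficiency with an analytic-continuation argument for necessity. Writing $t^{a}-1=2e^{ax/2}\sinh(ax/2)$ with $t=e^{x}$ and noting the cancellation $e^{\gamma(a,b)x}\cdot e^{(a-b)x/2}=e^{x/2}$ one obtains
\[
 f_{a,b}(e^{x})=e^{x/2}\cdot\frac{\mathrm{sinhc}(ax/2)}{\mathrm{sinhc}(bx/2)},\qquad \mathrm{sinhc}(y):=\sinh(y)/y,
\]
so $f_{a,b}\preceq f_{c,d}$ is equivalent to the positive definiteness on $\mathbb{R}$ of the real, even, strictly positive function
\[
 h(x)=\frac{\mathrm{sinhc}(ax/2)\,\mathrm{sinhc}(dx/2)}{\mathrm{sinhc}(bx/2)\,\mathrm{sinhc}(cx/2)},
\]
which has $h(0)=1$ and extends meromorphically to $\mathbb{C}$ with potential poles at $\pm 2\pi ik/b$, $\pm 2\pi ik/c$ and zeros at $\pm 2\pi ik/a$, $\pm 2\pi ik/d$ ($k\ge 1$).

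For sufficiency, in each case I verify that the stated constraints on $(c,d)$ coincide exactly with the weak submajorisation $(a,d)\preceq_{w}(b,c)$, i.e.\ $\max(a,d)\le\max(b,c)$ together with $a+d\le b+c$, and then invoke Theorem~1.2 with $n=m=1$. In case~(1) ($a\ge b$), the hypotheses $c\ge a$ and $d\le c-a+b\le c$ give $\max(b,c)=c\ge\max(a,d)$ and $a+d\le b+c$. In case~(2) ($a<b$) the union splits into the subset $d\le c$, where $\max(a,d)\le\max(a,c)\le\max(b,c)$ is automatic, and the subset $c\le d\le c-a+b$ with $d\le b$, where $\max(a,d)\le b=\max(b,c)$ and $a+d\le b+c$.

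For necessity, positive definiteness forces $|h(x)|\le h(0)=1$. The asymptotic $\mathrm{sinhc}(y)\sim e^{y}/(2y)$ gives $h(x)\sim (bc/(ad))\,e^{(a+d-b-c)x/2}$ as $x\to+\infty$, with straightforward adjustments if some parameter vanishes, so boundedness forces $a+d\le b+c$, i.e.\ $d\le c-a+b$. The subtler inequality $\max(a,d)\le\max(b,c)=:M$ is obtained by contradiction: assume $a>M$ (the case $d>M$ is entirely symmetric). The nearest poles of $h$ lie on the lines $|\mathrm{Im}\,z|=2\pi/M$, so $h$ is analytic in the open strip $|\mathrm{Im}\,z|<2\pi/M$, and within this strip $h(2\pi i/a)=0$ because $\mathrm{sinhc}(\pi i)=0$ and $2\pi/a<2\pi/M$. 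On the other hand, Bochner's theorem gives $h=\hat\mu$ for a probability measure $\mu$ on $\mathbb{R}$, and the boundedness of $h$ on each horizontal line of the strip (by the same asymptotic) yields via a Paley--Wiener argument that $\mu$ has exponential moments of rate approaching $2\pi/M$; therefore $\hat\mu(iy)=\int e^{-y\xi}\,d\mu(\xi)>0$ throughout $|y|<2\pi/M$, and by uniqueness of analytic continuation $h(2\pi i/a)=\hat\mu(2\pi i/a)>0$, contradicting the zero.

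I expect the hardest step to be the final analytic-continuation step---ensuring that $\hat\mu$ is defined and strictly positive on an imaginary segment reaching past $2\pi i/a$---since it hinges on a genuine Paley--Wiener statement for the measures underlying positive-definite functions that extend meromorphically to a strip; the degenerate cases where some of $a,b,c,d$ vanishes will also need small separate checks on the asymptotic and on the pole/zero bookkeeping.
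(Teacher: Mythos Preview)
Your reduction to the positive definiteness of
\[
h(x)=\frac{\mathrm{sinhc}(ax/2)\,\mathrm{sinhc}(dx/2)}{\mathrm{sinhc}(bx/2)\,\mathrm{sinhc}(cx/2)}
\]
matches the paper exactly, as does the sufficiency argument via weak submajorisation (you wrote ``Theorem~1.2'' where you meant Theorem~1.1; the former is the statement you are proving). The necessity of $a+d\le b+c$ from $|h|\le h(0)=1$ together with the asymptotic is also identical to the paper's Proposition~2.6.

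The genuine difference is in the necessity of $\max(a,d)\le\max(b,c)$. The paper obtains this from Proposition~2.7, which in turn rests on Lemma~2.5: a direct residue computation showing that for a carefully chosen prime $p$ the Fourier transform of
\[
\frac{\sinh((p+1)x/p)}{\sinh x}\,\frac{(\sinh(x/p))^n}{(\sinh((p-1)x/p))^n}
\]
is eventually negative, followed by a monotonicity reduction (Lemma~2.4(2)). Your route is instead structural: $h$ is analytic in the strip $|\mathrm{Im}\,z|<2\pi/M$ and has the zero $2\pi i/a$ there, while any characteristic function analytic in a strip must be strictly positive on the imaginary segment in that strip. This is correct, and once $a+d\le b+c$ is already in hand the zero at $2\pi i/a$ is indeed uncancelled since $b,c<a$ forces $b/a,c/a<1$. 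What your argument buys is conceptual cleanliness and a method that visibly generalises; what it costs is the external input you flag yourself, namely the Lukacs-type theorem that analyticity of a characteristic function in a disc around $0$ forces exponential moments (hence the Laplace integral representation) all the way out to the first singularity on the imaginary axis. The paper's computation is heavier but self-contained. Either approach settles the $n=2$ case needed here; note, however, that the paper's Lemma~2.5/Proposition~2.7 are stated for general $n$, whereas your zero-on-the-imaginary-axis argument as written is tailored to the two-factor quotient.
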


We remark that this statement has been proved in \cite{kanemitsu} based on the facts given by \cite{hiaikosaki} and \cite{hkpr}.

%%%%%
%%%%%  section 2
%%%%%

\section{Positive Definite Functions and Infinitely Divisible Functions}

We call a function $\varphi : \mathbb{R}\longrightarrow \mathbb{C}$ positive definite
if, for any positive integer $n$ and any real numbers 
$x_1, x_2, \ldots, x_n\in \mathbb{R}$, the matrix
\[  \begin{pmatrix}  
     \varphi(0) & \varphi(x_1-x_2) & \cdots & \varphi(x_1-x_n) \\
     \varphi(x_2-x_1) & \varphi(0) & \cdots & \varphi(x_2-x_n) \\
     \vdots & \vdots & \ddots & \vdots \\
     \varphi(x_n-x_1) & \varphi(x_n-x_2) & \cdots & \varphi(0)
   \end{pmatrix}  \]
is positive, that is,
\[  \sum_{i,j=1}^n \alpha_i\overline{\alpha_j} \varphi (x_i-x_j) \ge 0  \quad \text{ for } \alpha_1, \alpha_2,\ldots, \alpha_n\in \mathbb{C} . \]
By definition, it easily follows that the function $x\mapsto e^{iax}$ is positive definite
for any $a\in \mathbb{R}$.
This implies the Fourier transform $\hat{\mu}(x) = \int_{-\infty}^\infty e^{ixt}d\mu(t)$ 
of  a finite positive measure $\mu$ on 
$\mathbb{R}$ is positive definite.
As Bochner's theorem \cite{bhatia2}, 
it is known that $\varphi$ is positive definite and continuous at $0$ if and only if
there exists a finite, positive measure $\mu$ on $\mathbb{R}$ satisfying
\[  \varphi(x) = \int_{-\infty}^\infty e^{ixt}d\mu(t)  .   \]

\vspace{5mm}

%%%%%%  Lemma 2.1
%%%%%%

\begin{lemma}
Let $\varphi, \varphi_1, \varphi_2, \ldots$ be positive definite and $\psi$ be the point-wise limit of
the sequence $\{\varphi_n\}_{n=1}^\infty$.
\begin{enumerate}
  \item[(1)] For a positive real number $a,b$,  $a\varphi_1+b\varphi_2$ is positive definite. .
  \item[(2)] $\psi$ is positive definite.
  \item[(3)] The product $\varphi_1 \varphi_2$ of $\varphi_1$ and $\varphi_2$ is positive definite.  
  \item[(4)] $e^\varphi$ is positive definite.
\end{enumerate}
\end{lemma}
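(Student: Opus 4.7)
For part (1), I would just observe that, for any real $x_1,\dots,x_n$, the matrix $[(a\varphi_1+b\varphi_2)(x_i-x_j)]$ equals $a[\varphi_1(x_i-x_j)]+b[\varphi_2(x_i-x_j)]$, and a nonnegative combination of two positive semidefinite matrices is positive semidefinite. For part (2), I would fix $x_1,\dots,x_n$ and $\alpha_1,\dots,\alpha_n\in\mathbb{C}$ and pass to the limit in the scalar inequality $\sum_{i,j}\alpha_i\overline{\alpha_j}\varphi_k(x_i-x_j)\ge 0$; since each term is a continuous function of finitely many values of $\varphi_k$, pointwise convergence yields $\sum_{i,j}\alpha_i\overline{\alpha_j}\psi(x_i-x_j)\ge 0$.

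Part (3) is the main obstacle and the only part that really requires content beyond closure under limits and nonnegative combinations: I plan to invoke Schur's product theorem, which says that the Hadamard (entrywise) product of two positive semidefinite matrices is again positive semidefinite. Applied to $A=[\varphi_1(x_i-x_j)]$ and $B=[\varphi_2(x_i-x_j)]$, this gives that $A\circ B=[\varphi_1(x_i-x_j)\varphi_2(x_i-x_j)]$ is positive semidefinite, which is exactly what positive definiteness of $\varphi_1\varphi_2$ demands. If a self-contained argument is preferred, I would write $B=\sum_k v_k v_k^*$ in spectral form and verify $\xi^*(A\circ B)\xi=\sum_k (D_{v_k}\xi)^* A (D_{v_k}\xi)\ge 0$, where $D_{v_k}$ is the diagonal matrix built from $v_k$.

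For part (4), the plan is to expand $e^{\varphi(x)}=\sum_{k=0}^\infty \varphi(x)^k/k!$. The constant function $1$ is positive definite since $\sum_{i,j}\alpha_i\overline{\alpha_j}=|\sum_i\alpha_i|^2\ge 0$, and then an induction using (3) shows every power $\varphi^k$ is positive definite. By (1) each partial sum $\sum_{k=0}^N \varphi^k/k!$ is positive definite, and since the series converges pointwise on $\mathbb{R}$, part (2) finishes the argument. The only non-routine ingredient in the whole lemma is Schur's product theorem used in (3); everything else reduces to elementary closure properties of the cone of positive semidefinite matrices.
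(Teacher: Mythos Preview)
Your proof is correct and follows essentially the same route as the paper: parts (1) and (2) are dismissed as immediate from the definition, part (3) is handled by the Schur product theorem applied to the matrices $[\varphi_1(x_i-x_j)]$ and $[\varphi_2(x_i-x_j)]$, and part (4) is obtained by writing $e^{\varphi(x)}=\sum_{k\ge 0}\varphi(x)^k/k!$ and combining (1)--(3). Your write-up is in fact more detailed than the paper's (which just says (1) and (2) ``easily follow by definition'' and invokes the Schur product without proof), and your optional self-contained argument for Schur via the spectral decomposition is a nice addition.
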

\begin{proof}
(1) and (2) easily follow by definition.

(3) When $A=(a_{ij})$ and $B=(b_{ij})$ are positive matrices in $\mathbb{M}_n(\mathbb{C})$,
the Schur product $A\circ B = (a_{ij}b_{ij})$ of $A$ and $B$ is also positive.
Applying this fact for a matrix 
$(\varphi_1(x_i - x_j) \varphi_2(x_i -x_j) )_{i,j=1}^n$
$(x_1, \ldots, x_n \in \mathbb{R})$, 
we can see $\varphi_1\varphi_2$ is positive definite.

(4) Since $e^{\varphi}(x) =e^{\varphi(x)}=\sum_{k=0}^\infty \varphi(x)^k/k!$, a matrix
\[  (e^{\varphi}(x_i-x_j) )_{i,j=1}^n =
   \lim_{m\to \infty} \sum_{k=0}^m \frac{1}{k!}( (\varphi(x_i-x_j)^k )_{i,j=1}^n \]
is positive definite by (1), (2), and (3).
So $e^\varphi$ is positive definite. 
\end{proof}

\vspace{5mm}

A positive definite function $\varphi$ is called infinitely divisible if $\varphi^r$ is
positive definite for any $r>0$.
When $\varphi$ is the Fourier transform of a probability measure $\mu$ on $\mathbb{R}$,
i.e., 
\[  \varphi(x) = \int_{-\infty}^\infty e^{ixt}d\mu(t) , \]
we call $\varphi$ the characteristic function of $\mu$.
It is known as L\'evi-Khintchine theorem that $\varphi$ is an infinitely divisible characteristic function if and only if
it can be written as
\[  \log \varphi(x) = i\gamma x 
       + \int_{-\infty}^\infty (e^{ixt}-1-\frac{ixt}{1+t^2})\frac{1+t^2}{t^2}d\nu(t)   \]
with a finite positive measure $\nu$ and $\gamma\in \mathbb{R}$.
It is also known as Kolmogorov's theorem that $\varphi$ is the characteristic function of an 
infinitely divisible probability measure $\mu$ with finite second moment if and only if
\[  \log \varphi (x) = i\gamma x + \int_{-\infty}^\infty (\frac{e^{itx}-1-itx}{t^2})d\nu (t)  \]
with a finite measure $\nu$ and $\gamma \in \mathbb{R}$ (\cite{kolmogorov}, \cite{kosaki}).

\vspace{5mm}

%%%%%  Lemma 2.2
%%%%%

\begin{lemma}
Let $a,b$ be positive numbers and set
\[   f(x) = \frac{b\sinh ax}{a\sinh bx}  .  \]
Then the following are equivalent:
\begin{enumerate}
  \item[(1)]  $a\le b$.
  \item[(2)]  $f$ is positive definite. 
  \item[(3)]  $f$ is infinitely divisible.
\end{enumerate}
\end{lemma}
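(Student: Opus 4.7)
The two easy directions are $(3) \Rightarrow (2)$, which is immediate from infinite divisibility with $r=1$, and $(2) \Rightarrow (1)$, which rests on the classical bound $|\varphi(x)| \le \varphi(0)$ enjoyed by every positive definite function $\varphi$. Here $f(0)=1$, while for $a>b$ one has $f(x)\sim (b/a)e^{(a-b)x}\to\infty$ as $x\to+\infty$, contradicting the bound; hence $a\le b$.

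The substantive implication is $(1)\Rightarrow(3)$. When $a=b$, $f\equiv 1$ is trivially infinitely divisible, so assume $a<b$. My plan is to derive an explicit L\'evy-Khintchine-type representation of $-\log f$. Starting from Euler's product $\sinh z = z\prod_{k\ge 1}(1+z^2/(k\pi)^2)$ one immediately obtains
\[
 f(x) = \prod_{k=1}^{\infty} \frac{1+a^2x^2/(k\pi)^2}{1+b^2x^2/(k\pi)^2},
\]
so the task reduces to handling each logarithmic term. The key auxiliary identity is
\[
 \log(1+\lambda^2 x^2) = 2\int_0^{\infty}(1-\cos(xs))\,\frac{e^{-s/\lambda}}{s}\,ds \qquad (\lambda>0),
\]
which I would verify by differentiating in $x$ and invoking the Laplace integral $\int_0^{\infty}\sin(xs)e^{-cs}ds=x/(c^2+x^2)$. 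Applying it to each factor, then summing the geometric series $\sum_{k\ge 1}e^{-k\pi s/c}=1/(e^{\pi s/c}-1)$ and interchanging sum and integral, I expect to arrive at
\[
 -\log f(x) = 2\int_0^{\infty}(1-\cos(xs))\left(\frac{1}{e^{\pi s/b}-1}-\frac{1}{e^{\pi s/a}-1}\right)\frac{ds}{s},
\]
in which the bracketed weight is non-negative precisely when $a\le b$.

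Infinite divisibility then follows quickly from the preceding Lemma. For each fixed $s$, $x\mapsto\cos(xs)$ is positive definite, so part~(4) of that Lemma makes $e^{r\cos(xs)}$ positive definite for every $r>0$; multiplying by the positive constant $e^{-r}$ preserves positive definiteness, and a Riemann-sum discretisation of the displayed integral combined with parts~(1)--(3) of the Lemma shows $f^r=\exp(-r(-\log f))$ is positive definite for all $r>0$, which is exactly infinite divisibility. The main technical obstacle I anticipate lies in justifying the interchange of summation and integration near $s=0$, where each $1/(e^{\pi s/c}-1)$ has a simple pole making the bracket behave like $(b-a)/(\pi s)$; this is balanced by $(1-\cos(xs))=O(s^2)$ and by exponential decay at infinity, but the estimates require a careful treatment.
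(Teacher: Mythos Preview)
Your argument is correct, and the route you take for $(1)\Rightarrow(3)$ is genuinely different from the paper's. The paper simply invokes an integral representation due to Kosaki,
\[
\log f(x)=\int_{-\infty}^{\infty}\bigl(e^{ixt}-1-ixt\bigr)\,
\frac{\sinh\!\bigl((1/a-1/b)\pi t/2\bigr)}{2t\,\sinh(\pi t/2a)\,\sinh(\pi t/2b)}\,dt,
\]
and reads off infinite divisibility from the Kolmogorov form. You instead derive a representation from scratch via the Euler product for $\sinh$ and the Frullani-type identity for $\log(1+\lambda^2x^2)$, and then conclude using only Lemma~2.1 rather than the L\'evy--Khintchine/Kolmogorov machinery. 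In fact your density and Kosaki's coincide: using $\coth u=1+2/(e^{2u}-1)$ one checks that
\[
\frac{\sinh\!\bigl((1/a-1/b)\pi t/2\bigr)}{t\,\sinh(\pi t/2a)\,\sinh(\pi t/2b)}
=\frac{2}{t}\Bigl(\frac{1}{e^{\pi t/b}-1}-\frac{1}{e^{\pi t/a}-1}\Bigr),
\]
so you have essentially rederived Kosaki's formula by an elementary path. The trade-off is clear: the paper's proof is a one-line citation, while yours is self-contained and needs no outside input beyond Lemma~2.1. Your worry about the interchange of sum and integral is milder than you suggest: once $a<b$, each summand $(1-\cos(xs))(e^{-k\pi s/b}-e^{-k\pi s/a})/s$ is non-negative, so Tonelli applies directly and no delicate estimate is needed.
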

\begin{proof}
$(3)\Rightarrow(2)$ It is clear by definition.

$(2)\Rightarrow (1)$ We assume that $a>b$.
Since $f(0)=1$, $f(-x) = f(x)$, and $\lim_{x\to \infty}f(x)=\infty$,
\[  \begin{pmatrix} f(0) & f(-x)  \\ f(x) & f(0) \end{pmatrix}  \]
is not positive for a sufficiently large $x$.
So $f$ is not positive definite.
This means the positive definiteness of $f$ implies $a \le b$.

$(1)\Rightarrow(3)$
The function $f(x)$ can be written as
\[  \log f(x) = \int_{-\infty}^\infty (e^{ixt}-1-ixt)
                   \frac{\sinh ((1/a-1/b)\pi t/2)}{2t\sinh(\pi t/2a)\sinh(\pi t/2b)}dt \]
(\cite{kosaki}:Corollary 3).
So we have $f$ is infinitely divisible when $a\le b$.
\end{proof}

\vspace{5mm}

Using above integral expression of the function $\frac{b\sinh ax}{a\sinh bx}$,
H. Kosaki(\cite{kosaki}: Theorem 5) proved
\[  \frac{(\alpha - 1)\beta}{\alpha(\beta-1)}
     \frac{\sinh(\alpha x)\sinh((\beta-1)x)}{\sinh((\alpha-1)x)\sinh(\beta x)}  \] 
is infinitely divisible if $\beta\ge \alpha$.

\vspace{5mm}

%%%%%  Lemma 2.3
%%%%%

\begin{lemma}
Let $a,b,c,d$ be positive numbers with $d> \max\{a,b,c\}$ and $a+c=b+d$.
Then we have
\[   f(x) = \frac{\sinh ax \sinh cx}{\sinh bx \sinh dx}  \]
is infinitely divisible.
\end{lemma}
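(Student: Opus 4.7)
The plan is to reduce the statement to Kosaki's Theorem~5, quoted immediately after Lemma 2.2, by a single dilation $x\mapsto \epsilon x$. That result asserts infinite divisibility of
\[ \frac{(\alpha-1)\beta}{\alpha(\beta-1)}\cdot\frac{\sinh(\alpha y)\sinh((\beta-1)y)}{\sinh((\alpha-1)y)\sinh(\beta y)} \]
whenever $\beta\ge\alpha$, and this is the special case of the present lemma in which the two differences $a-b$ and $d-c$ both equal $1$. In the general statement the two differences are an arbitrary common positive number, and this extra freedom is precisely what a linear rescaling of the argument can absorb.

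First I would unpack the hypotheses. From $a+c=b+d$ we read $a-b=d-c$; and $d>\max\{a,b,c\}$ gives $d>a$ and $d>c$, which, fed back into $a+c=b+d$, yield $c>b$ and $a>b$. Hence $\epsilon := a-b = d-c$ is strictly positive. Setting $\alpha := a/\epsilon$ and $\beta := d/\epsilon$, the relations $b=(\alpha-1)\epsilon$ and $c=(\beta-1)\epsilon$ follow at once. Since $b>0$ we have $\alpha>1$, and since $d>a$ we have $\beta>\alpha$, so Kosaki's hypotheses are satisfied for this $(\alpha,\beta)$.

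Next I would substitute $y=\epsilon x$ in Kosaki's function $g(y)$. Every matrix of the form $[g(\epsilon(x_i-x_j))]$ is of the form $[g(y_i-y_j)]$ with $y_i := \epsilon x_i$, so dilation preserves positive definiteness; applying the same remark to each power $g^r$ shows that infinite divisibility is preserved as well. Unwinding the substitution gives
\[ g(\epsilon x) = \frac{bd}{ac}\cdot\frac{\sinh(ax)\sinh(cx)}{\sinh(bx)\sinh(dx)}, \]
which is therefore infinitely divisible as a function of $x$.

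Finally I would strip off the positive constant $bd/(ac)$. Any positive constant is trivially positive definite and infinitely divisible, so by Lemma 2.1(3) the product $(ac/(bd))^r\cdot (g(\epsilon x))^r$ is positive definite for every $r>0$; this product is exactly $f(x)^r$, whence $f$ is infinitely divisible. There is no real obstacle in this argument: the analytic work (the explicit L\'evy--Khintchine representation) is already contained in Kosaki's theorem, and what remains is just to recognize $\epsilon=a-b=d-c$ as the right dilation parameter and to verify that the inequality $\beta\ge\alpha$ corresponds to $d\ge a$, both of which are immediate from the hypothesis $d>\max\{a,b,c\}$.
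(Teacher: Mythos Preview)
Your argument is correct and is essentially identical to the paper's own proof: both recognize $\epsilon=a-b=d-c>0$ as the dilation parameter, set $\alpha=a/\epsilon$ and $\beta=d/\epsilon$ so that Kosaki's Theorem~5 applies, and then rescale $y=\epsilon x$ to recover $f$. The only cosmetic difference is that you spell out the handling of the normalizing constant and the preservation of infinite divisibility under dilation, whereas the paper leaves these implicit.
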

\begin{proof} 
By the assumption, $d>\max \{a,c\} \ge \min\{a,c\} >b$.
Since $\frac{a}{a-b} -1 = \frac{b}{a-b}$, $\frac{d}{a-b}-1=\frac{c}{a-b}$, 
and $\frac{a}{a-b}<\frac{d}{a-b}$, we have the function
\[  x \mapsto \frac{\sinh \frac{a}{a-b}x \sinh\frac{c}{a-b}x}{\sinh \frac{b}{a-b}x \sinh\frac{d}{a-b}x} \]
is infinitely divisible.
So the function
\[  t \mapsto x=(a-b)t \mapsto  
     \frac{\sinh \frac{a}{a-b}x \sinh\frac{c}{a-b}x}{\sinh \frac{b}{a-b}x \sinh\frac{d}{a-b}x} 
     = \frac{\sinh at \sinh ct}{\sinh bt \sinh dt}  \]
is also infinitely divisible.
\end{proof}

\vspace{5mm}

%%%%%  Lemma 2.4
%%%%%

\begin{lemma}
Let $a_i\ge a_i'>0$ and $0<b_i \le b_i'$  $(i=1,2,\ldots, n)$. 
\begin{align*}
  (1) & \;  \prod_{i=1}^n \frac{\sinh a_ix}{\sinh b_ix} \text{ is infinitely divisible }
    \Rightarrow \prod_{i=1}^n \frac{\sinh a_i'x}{\sinh b_i'x} \text{ is infinitely divisible}.  \\
  (2) & \;  \prod_{i=1}^n \frac{\sinh a_i'x}{\sinh b_i'x} \text{ is not positive definite }
    \Rightarrow \prod_{i=1}^n \frac{\sinh a_ix}{\sinh b_ix} \text{ is not positive definite}. 
\end{align*}
\end{lemma}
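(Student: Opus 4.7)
The plan is to reduce both statements to Lemma 2.2 (the criterion for $\frac{b\sinh ax}{a\sinh bx}$ to be infinitely divisible / positive definite) through a single telescoping factorization. Specifically, I would write
\[
  \prod_{i=1}^n \frac{\sinh a_i' x}{\sinh b_i' x}
  \;=\; \left(\prod_{i=1}^n \frac{\sinh a_i x}{\sinh b_i x}\right)
        \cdot \prod_{i=1}^n \frac{\sinh a_i' x}{\sinh a_i x}
        \cdot \prod_{i=1}^n \frac{\sinh b_i x}{\sinh b_i' x},
\]
so the primed product differs from the unprimed one by a ``correction factor'' $G(x)$ consisting of the last two products.

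Since by hypothesis $a_i' \le a_i$ and $b_i \le b_i'$, Lemma 2.2 applies to each factor of $G$: each of $\frac{\sinh a_i' x}{\sinh a_i x}$ and $\frac{\sinh b_i x}{\sinh b_i' x}$ agrees, up to a positive multiplicative constant, with a function of the form $\frac{b\sinh ax}{a\sinh bx}$ with $a \le b$, hence is infinitely divisible (the positive constant being harmless since both positive definiteness and infinite divisibility are preserved by multiplication by positive scalars). A product of infinitely divisible functions is again infinitely divisible: for any $r>0$, the $r$th power of the product equals the product of the $r$th powers, each of which is positive definite by hypothesis, and a finite product of positive definite functions is positive definite by Lemma 2.1(3). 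Thus $G$ itself is infinitely divisible, and in particular positive definite.

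With this in hand, part (1) is immediate: if the unprimed product $F(x)$ is infinitely divisible, then $F \cdot G$ is a product of two infinitely divisible functions, hence infinitely divisible, which is exactly the primed product. For part (2), I would argue by contrapositive. Suppose $F(x) = \prod_{i=1}^n \frac{\sinh a_i x}{\sinh b_i x}$ is positive definite; then, since $G$ is positive definite, Lemma 2.1(3) gives that $F \cdot G = \prod_{i=1}^n \frac{\sinh a_i' x}{\sinh b_i' x}$ is positive definite, contradicting the assumed failure of positive definiteness for the primed product.

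The only real step of substance is recognizing the telescoping factorization and verifying that each correction factor satisfies the hypothesis $a \le b$ of Lemma 2.2; after that, the conclusions follow directly from Lemma 2.1(3) combined with Lemma 2.2. I do not anticipate any genuine technical obstacle beyond keeping the direction of the inequalities $a_i' \le a_i$ and $b_i \le b_i'$ straight when applying Lemma 2.2.
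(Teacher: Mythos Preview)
Your proposal is correct and follows essentially the same approach as the paper: the same telescoping factorization into $F\cdot G$ with $G=\prod_i \frac{\sinh a_i'x}{\sinh a_ix}\cdot\frac{\sinh b_ix}{\sinh b_i'x}$, followed by Lemma~2.2 on each correction factor and Lemma~2.1(3) to handle the products. The paper's write-up of (1) differs only cosmetically, writing out the $r$-th power of the factorization explicitly rather than first packaging the observation ``a product of infinitely divisible functions is infinitely divisible.''
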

\begin{proof}
(1)  For any $r>0$, we have
\[   (\prod_{i=1}^n\frac{\sinh a_i'x}{\sinh b_i'x} )^r =(\prod_{i=1}^n \frac{\sinh a_ix}{\sinh b_ix})^r
      \prod_{i=1}^n\bigl( (\frac{\sinh a_i'x}{\sinh a_ix})^r (\frac{\sinh b_i x}{\sinh b_i' x})^r\bigr) .  \]
By the assumption, Lemma 2.1 and Lemma 2.2, we can get the infinite divisibility of
\[   \prod_{i=1}^n\frac{\sinh a_i'x}{\sinh b_i'x} .  \]

(2) It suffices to show that $\prod_{i=1}^n\frac{\sinh a_i'x}{\sinh b_i'x}$ is positive definite
when $\prod_{i=1}^n\frac{\sinh a_ix}{\sinh b_ix}$ is positive definite.
By the identity
\[   \prod_{i=1}^n\frac{\sinh a_i'x}{\sinh b_i'x}  =\prod_{i=1}^n \frac{\sinh a_ix}{\sinh b_ix}
      \prod_{i=1}^n\bigl( \frac{\sinh a_i'x}{\sinh a_ix}\frac{\sinh b_i x}{\sinh b_i' x}\bigr) ,  \]
we can get the positive definiteness of $\prod_{i=1}^n\frac{\sinh a_i'x}{\sinh b_i'x}$.
\end{proof}

\vspace{5mm}

%%%%%  Lemma 2.5
%%%%%

\begin{lemma}
Let $p$ be a prime number with $p> 3$.
For any positive integer $n$, we have
\[  f(x) = \frac{\sinh ((p+1)x/p) }{\sinh x} \frac{(\sinh x/p)^n}{(\sinh ((p-1)x/p))^n}   \]
is not positive definite.
\end{lemma}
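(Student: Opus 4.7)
The plan is to apply Bochner's theorem via the Fourier transform. First I observe that $f$ is continuous on $\mathbb{R}$ (no real singularities) and decays exponentially at infinity: the net exponential rate of the $\sinh$-ratios is
\[
\frac{(p+1)+n}{p} - \frac{p+n(p-1)}{p} = \frac{1-n(p-2)}{p} \le -\frac{2}{p} < 0
\]
for $p \ge 5$ and $n \ge 1$. Hence $f \in L^1(\mathbb{R})$, so $\hat f(\xi) := \int_{-\infty}^{\infty} e^{-i\xi x} f(x)\,dx$ is a continuous function, and by Bochner's theorem combined with Fourier inversion, $f$ is positive definite if and only if $\hat f(\xi) \ge 0$ for every $\xi\in\mathbb{R}$. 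It therefore suffices to exhibit some $\xi > 0$ with $\hat f(\xi) < 0$.

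For $\xi > 0$ I would compute $\hat f(\xi)$ by closing the contour of $\int e^{-i\xi z} f(z)\, dz$ in the lower half-plane $\{\mathrm{Im}\,z<0\}$. The poles of $f$ there are the simple poles at $-ik\pi$ ($k\ge 1$) coming from $\sinh z$, and the order-$n$ poles at $-ikp\pi/(p-1)$ ($k\ge 1$) coming from $\sinh((p-1)z/p)^n$. Since $\pi < p\pi/(p-1)$ for every $p > 1$, the pole closest to the real axis is $-i\pi$, and it is a genuine simple pole of $f$: the numerator factors $\sinh((p+1)z/p)$ and $\sinh(z/p)$ are nonzero at $z=-i\pi$ because $(p+1)/p$ and $1/p$ are not integers for $p>1$.

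A direct residue computation at $-i\pi$, using $\sin((p+1)\pi/p) = -\sin(\pi/p)$ and $\sin((p-1)\pi/p) = \sin(\pi/p)$ so that the $n$-th-power factors of $f$ cancel cleanly, gives $\mathrm{Res}(f,-i\pi) = -i\sin(\pi/p)$. The residue theorem then yields
\[
\hat f(\xi) = -2\pi \sin(\pi/p)\, e^{-\pi\xi} + O\bigl(\xi^{n-1} e^{-p\pi\xi/(p-1)}\bigr) + O(e^{-2\pi\xi}) \quad (\xi\to +\infty),
\]
the two error terms arising respectively from the order-$n$ pole at $-ip\pi/(p-1)$ and from the next simple pole $-2i\pi$; both are $o(e^{-\pi\xi})$ because $p\pi/(p-1) > \pi$ strictly. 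Since $\sin(\pi/p) > 0$, this shows $\hat f(\xi)<0$ for all sufficiently large $\xi$, contradicting positive definiteness. The main technical obstacle is confirming that the polynomial factor $\xi^{n-1}$ from the order-$n$ pole cannot overtake the exponential separation $e^{-(p\pi/(p-1)-\pi)\xi}$; this reduces to the strict inequality $p\pi/(p-1)>\pi$, which is exactly where the hypothesis $p>3$ enters (beyond guaranteeing $L^1$ decay). The routine contour-closing estimates along the horizontal tails in the lower half-plane are controlled by the uniform exponential decay of $f$ in a strip $\{-\pi-\delta < \mathrm{Im}\,z < 0\}$ avoiding the pole at $-i\pi$.
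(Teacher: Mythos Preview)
Your argument is correct and follows the same strategy as the paper: compute $\hat f$ by contour integration and show that the residue at the pole nearest the real axis (at $\pm i\pi$) gives a negative leading term $-2\pi\sin(\pi/p)\,e^{-\pi\xi}$ that dominates all other contributions as $\xi\to\infty$. The paper works in the \emph{upper} half-plane over a rectangle of height $p\pi$ and uses the periodicity $\sinh(\alpha(z+p\pi i))=\pm\sinh(\alpha z)$ (valid because each frequency $\alpha\in\{1,\tfrac{p\pm1}{p},\tfrac1p\}$ satisfies $\alpha p\in\mathbb{Z}$) to obtain a closed formula $(1+(-1)^n e^{-p\pi s})\hat f(s)=2\pi i\sum\mathrm{Res}$ over finitely many poles, then reads off the same leading term. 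Your contour-shift past $-i\pi$ is more direct and avoids cataloguing the higher-order residues at $\tfrac{lp}{p-1}\pi i$; the paper's version yields an exact expression for $\hat f$ but in the end uses only the first residue anyway.

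Two small points to tighten. First, the inequality $p\pi/(p-1)>\pi$ holds for every $p>1$, so it is not ``where $p>3$ enters''; the hypothesis $p>3$ (hence $p\ge 5$ for primes) is used precisely for the $L^1$ decay $(1-n(p-2))/p<0$, as you already observed. Second, the closing sentence is slightly garbled: the strip $\{-\pi-\delta<\mathrm{Im}\,z<0\}$ \emph{contains} the pole $-i\pi$, so you cannot bound $f$ uniformly there. What you actually need is (i) uniform exponential decay of $|f(x+iy)|$ as $|x|\to\infty$ on the vertical sides of the rectangle (immediate from $|\sinh z|\ge\tfrac14 e^{|\mathrm{Re}\,z|}$ for large $|\mathrm{Re}\,z|$), and (ii) the bound $\bigl|\int_{\mathbb{R}-i(\pi+\delta)}e^{-i\xi z}f(z)\,dz\bigr|\le e^{-(\pi+\delta)\xi}\int_{\mathbb{R}}|f(x-i(\pi+\delta))|\,dx=O(e^{-(\pi+\delta)\xi})=o(e^{-\pi\xi})$, with $0<\delta<\pi/(p-1)$ so that the shifted line avoids the next pole at $-ip\pi/(p-1)$.
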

\begin{proof}
We prove the Fourier transform of $f$ is not positive by using the similar method 
in \cite{hkpr}:Lemma 5.2.
Set the function
\[  f(z) = \frac{\sinh ((p+1)z/p)}{ \sinh z} \frac{(\sinh z/p)^n}{(\sinh ((p-1)z/p))^n}  \]
and define the closed curve $C_1+C_2+C_3+C_4$ in $\mathbb{C}$ as follows:
\begin{align*}
   C_1: & \; z = x, \; x:-R\to R,  &  C_2: & \; z=R+iy, \; y:0\to p\pi, \\
   C_3: & \; z = x+p \pi i, \; x:R \to -R, & C_4: & \; z=-R+iy, \; y:p\pi \to 0.
\end{align*}
for $R>0$.

We have
\[  \int_{C_1+C_3} e^{izs} f(z) dz = (1+(-1)^ne^{-p\pi s}) \int_{-R}^R e^{ixs}f(x)dx  ,  \]
by the relation
\begin{align*}
  \sinh\frac{p+1}{p}(x+p\pi i) &= \sinh (\frac{p+1}{p}x), &
  \sinh\frac{1}{p}(x+p\pi i) &= -\sinh (\frac{1}{p}x),  \\
  \sinh (x+p\pi i) &= - \sinh x , &
  \sinh\frac{p-1}{p}(x+p\pi i) &= \sinh (\frac{p-1}{p}x).
\end{align*}
When $|\Re z|\ge \log \sqrt{2}$, 
\[   \frac{e^{|\Re z|}}{4} \le |\sinh z| \le e^{|\Re z|}  .   \]
Using this relation, we have, for a sufficiently large $R$,
\[  |e^{i(\pm R+iy)s} f(\pm R+iy)| \le e^{-ys}\times 
         \frac{ 4^{n+1} e^{(p+1)R/p} e^{nR/p} }{e^R e^{n(p-1)R/p } }
         =4^{n+1}e^{-ys}e^{(\frac{2n+1}{p}-n)R}. \]
So we can get
\[  \lim_{R\to \infty} \int_{C_1+C_2+C_3+C_4} e^{izs}f(z)dz =
     (1+(-1)^ne^{-p\pi s}) \hat{f}(s).  \]

The singular points of $f(z)$ in the rectangle $C_1+C_2+C_3+C_4$ are contained in
\[  \{k\pi i : k=0,1,2,\ldots, p\} \cup \{\frac{lp}{p-1}\pi i : l=0,1,2,\ldots,p-1\}. \]
We can see that $0$ and $p\pi i$ are removable singularities, each $k\pi i$ ($k\in \{1,2,\ldots, p-1\}$)
is a pole of order $1$,  each $\frac{lp}{p-1}\pi i$ ($l\in (\{1,2,\ldots,p-2\} \setminus \{\frac{p-1}{2} \} ) $) is a pole of order $n$,
and $\frac{p}{2}\pi i$ is a pole of order $n-1$.
For real numbers $\alpha, \beta$, we have
\[  e^{izs}  = e^{is(z-\alpha i)}e^{-\alpha s} = \sum_{k=0}^\infty c(\alpha i, k)(z-\alpha i)^k  \]
and
\begin{align*}
   \sinh \beta z & = \sinh (\beta (z-\alpha i) + \beta\alpha i)  \\
         &= \cos (\beta\alpha)  \sinh(\beta(z-\alpha i)) +i\sin (\beta\alpha) \cosh (\beta(z-\alpha i))  \\
         &= \sum_{k=0}^\infty d_k(\beta, \alpha i) (z-\alpha i)^k,
\end{align*}
where
\[  c(\alpha i, k) =\frac{(is)^k}{k!}e^{-\alpha s}, \quad
    d_k(\beta, \alpha i) = \begin{cases} \frac{\beta^k}{k!}i\sin (\beta\alpha)  &  (k: \text{ even}) \\
                                                             \frac{\beta^k}{k!}\cos (\beta\alpha)  &  (k: \text{ odd})  \end{cases} . \]
When $\alpha \in \{\pi, 2\pi, \ldots, (p-1)\pi \}$, the residue ${\rm Res}(e^{izs} f(z): \alpha i)$ of $e^{izs}f(z)$ at $\alpha i$ is 
\begin{align*}
   {\rm Res}(e^{izs}f(z): \alpha i)  
  & = \frac{c(\alpha i,0)d_0(\frac{p+1}{p}, \alpha i) d_0(\frac{1}{p}, \alpha i)^n}{d_1(1, \alpha i) d_0(\frac{p-1}{p}, \alpha i)^n}  \\
  & = e^{-\alpha s} \frac{i (\sin \frac{\alpha}{p})^n \sin(\frac{p+1}{p}\alpha) } { \cos \alpha (\sin (\frac{p-1}{p}\alpha) )^n} .
\end{align*}
When $\alpha \in ( \{\frac{p\pi}{p-1}, \frac{2p\pi}{p-1}, \ldots, \frac{(p-2)p\pi}{p-1} \}\setminus 
\{\frac{p}{2}\pi \} )$,
$f(z)(z-\alpha i)^n$ is analytic at $\alpha i$.
So this has the Taylor expansion at $\alpha i$ as follows:
\[  f(z)(z-\alpha i)^n = \sum_{k=0}^\infty e_k(\alpha)(z -\alpha i)^k ,  \]
where we remark that $e_k(\alpha)$ does not depend on $s$.
So we can compute
\[  {\rm Res}(e^{izs}f(z): \alpha i)  = \sum_{k=0}^{n-1} c(\alpha i, k)e_{n-1-k}(\alpha)
   = e^{-\alpha s} (\sum_{k=0}^{n-1}\frac{e_{n-1-k}(\alpha)}{k!}(is)^k ).  \]
Because $\frac{p}{2}\pi i$ is a pole of order $n-1$, by the similar argument, we have
\[   {\rm Res}(e^{izs}f(z): \frac{p}{2}\pi i)  = \sum_{k=0}^{n-2} c(\frac{p}{2}\pi i, k)e_{n-2-k}
   = e^{-p\pi s/2} (\sum_{k=0}^{n-2}\frac{e_{n-2-k}}{k!}(is)^k )  \]
for a suitable numbers $\{e_n\}$. 
By the Cauchy Residue Theorem, we have
\[ (1+(-1)^ne^{-p\pi s})\hat{f}(s) =  2\pi i \bigl( \sum_{k=1}^{p-1} {\rm Res}(e^{izs}f(z):k\pi i) + 
   \sum_{l=1}^{p-2} {\rm Res}(e^{izs} f(z): \frac{lp\pi i}{p-1}) \bigr) .\]
Then we have
\begin{align*}
   & e^{\pi s} (1+(-1)^ne^{-p\pi s})\hat{f}(s) \\
 = & 2\pi \frac{ (\sin \frac{\pi}{p})^n \sin(\frac{p+1}{p}\pi)}{(\sin (\frac{p-1}{p}\pi))^n} 
      -2\pi \sum_{k=2}^{p-1}e^{(1-k)\pi s}\frac{(\sin\frac{k\pi}{p})^n \sin\frac{k(p+1)\pi}{p} }{(\cos k\pi) (\sin \frac{k(p-1)\pi}{p})^n} \\
   & \qquad + 2 \pi i \sum_{l=1, l\neq (p-1)/2}^{p-2} e^{(1-\frac{lp}{p-1})\pi s} 
     (\sum_{k=0}^{n-1}\frac{e_{n-1-k}(\frac{lp\pi}{p-1})}{k!} (is)^k)  \\
   & \qquad +2 \pi i   e^{(1-p/2)\pi s} (\sum_{k=0}^{n-2}\frac{e_{n-2-k}}{k!}(is)^k ).
\end{align*}
When $s$ tends to $\infty$, then the right-hand side of above identity tends to
\[   2\pi \frac{ (\sin \frac{\pi}{p})^n \sin(\frac{p+1}{p}\pi)}{(\sin (\frac{p-1}{p}\pi))^n}<0,  \]
that is,  $\hat{f}(s)$ is not positive for a sufficiently large $s$.
This means that $f$ is not positive definite.
\end{proof}

\vspace{5mm}

%%%%%  Proposition 2.6
%%%%%

\begin{proposition}
Let $a_i, b_i>0$  $(i=1,2,\ldots,n)$ with $a_1+a_2+\cdots + a_n>b_1+b_2+\cdots +b_n$.
Then we have
\[  \prod_{i=1}^n \frac{\sinh a_i x}{\sinh b_i x}  \]
is not positive definite.
\end{proposition}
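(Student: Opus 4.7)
The plan is to use the elementary fact that every positive definite function $\varphi$ on $\mathbb{R}$ is bounded by $\varphi(0)$. Concretely, positive definiteness applied to a $2\times 2$ matrix with arguments $0$ and $x$ forces
\[
  \begin{pmatrix} \varphi(0) & \varphi(-x) \\ \varphi(x) & \varphi(0) \end{pmatrix} \ge 0,
\]
hence $|\varphi(x)|\le \varphi(0)$ for all $x$. So it will suffice to show that the candidate function $f(x)=\prod_{i=1}^n \sinh(a_ix)/\sinh(b_ix)$ is finite at $0$ but unbounded as $x\to\infty$. This is exactly the strategy already used in the proof of Lemma 2.2 above.

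First I would observe that for each $i$ the factor $\sinh(a_ix)/\sinh(b_ix)$ extends continuously to $x=0$ with value $a_i/b_i$, so that $f$ is a well-defined continuous function on $\mathbb{R}$ with $f(0)=\prod_{i=1}^n a_i/b_i<\infty$. Since $\sinh(-y)=-\sinh(y)$, $f$ is also even and real-valued, so the positive definiteness criterion reduces to $|f(x)|\le f(0)$ for all $x\in\mathbb{R}$.

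Next I would extract the asymptotic behaviour as $x\to\infty$. For each $c>0$ one has $\sinh(cx)=\tfrac12 e^{cx}(1-e^{-2cx})$, so
\[
  \frac{\sinh(a_ix)}{\sinh(b_ix)} = e^{(a_i-b_i)x}\,\frac{1-e^{-2a_ix}}{1-e^{-2b_ix}},
\]
and multiplying over $i$ gives $f(x)=e^{(\sum_i a_i-\sum_i b_i)x}\cdot g(x)$, where $g(x)\to 1$ as $x\to\infty$. Since the hypothesis is $\sum_i a_i>\sum_i b_i$, the exponential prefactor blows up, and hence $f(x)\to\infty$.

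Combining these two facts, for sufficiently large $x>0$ we have $f(x)>f(0)$, contradicting the bound $|f(x)|\le f(0)$ required for positive definiteness. Thus $f$ cannot be positive definite. There is no serious obstacle here beyond carefully writing out the $x\to\infty$ asymptotics; the whole argument is an immediate application of the $2\times 2$ principal submatrix test, exactly in the spirit of the implication $(2)\Rightarrow(1)$ in Lemma 2.2.
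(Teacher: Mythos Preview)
Your proof is correct and follows essentially the same route as the paper: both arguments use the $2\times 2$ principal submatrix test (equivalently, the bound $|\varphi(x)|\le\varphi(0)$) together with the observation that the function tends to $\infty$ as $x\to\infty$ because $\sum a_i>\sum b_i$. The only cosmetic difference is that the paper first normalizes by the constant $\prod b_i/a_i$ so that $f(0)=1$, and you provide more explicit detail on the asymptotics via $\sinh(cx)=\tfrac12 e^{cx}(1-e^{-2cx})$.
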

\begin{proof}
We set $f(x) = \prod_{i=1}^n \frac{b_i \sinh a_i x}{a_i\sinh b_i x}$. 
It suffices to show $f$ is not positive definite.
Clearly we have $f(0)=1$, and $f(x)=f(-x)$. 
Since $a_1+a_2+\cdots +a_n >b_1+b_2+\cdots +b_n$, it follows $\lim_{x\to \infty} f(x)=\infty$.
Then the self-adjoint matrix 
\[  \begin{pmatrix} f(0) & f(-x)  \\ f(x) & f(0) \end{pmatrix}  \]
 is not positive for a sufficiently large $x$.
So $f$ is not positive definite.
\end{proof}

\vspace{5mm}

%%%%%  Proposition 2.7
%%%%%

\begin{proposition}
Let $a_i, b_i>0$  $(i=1,2,\ldots,n)$ with $a_1> \max \{b_1, b_2, \ldots , b_n\}$.
Then we have
\[  \prod_{i=1}^n \frac{\sinh a_i x}{\sinh b_i x}  \]
is not positive definite.
\end{proposition}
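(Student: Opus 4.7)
Set $c := \max\{b_1, \ldots, b_n\} > 0$, so that by hypothesis $a_1 > c$. For $n=1$ the statement is immediate from Lemma 2.2 (the contrapositive that a positive definite ratio $\sinh(ax)/\sinh(bx)$ forces $a \leq b$). So assume $n \geq 2$. The approach has two moves: first normalise the denominators to a common value $\sinh(cx)$ by multiplying $f$ by a positive definite function, then compare the resulting product against a scaled instance of Lemma 2.5 via the monotonicity principle Lemma 2.4(2).

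For the first move, let $\psi(x) := \prod_{i=1}^n \sinh(b_i x)/\sinh(cx)$. Each factor is positive definite by Lemma 2.2 (as $b_i \leq c$), so $\psi$ is positive definite by Lemma 2.1(3). Writing $f(x) = \prod_{i=1}^n \sinh(a_i x)/\sinh(b_i x)$, the product
\[
g(x) := f(x)\,\psi(x) = \prod_{i=1}^n \frac{\sinh(a_i x)}{\sinh(cx)}
\]
has every denominator equal to $\sinh(cx)$. If $f$ were positive definite, then $g = f\psi$ would also be positive definite by Lemma 2.1(3); so it suffices to show that $g$ is \emph{not} positive definite.

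For the second move, fix a prime $p > 3$ large enough that $a_1 \geq \tfrac{p+1}{p-1}c$ and $a_i \geq \tfrac{c}{p-1}$ for every $i = 2, \ldots, n$; such $p$ exists since $a_1 > c$ and each $a_i > 0$. By Lemma 2.5 applied with its parameter equal to $n - 1 \geq 1$, the function
\[
h(x) = \frac{\sinh((p+1)x/p)}{\sinh(x)}\cdot\frac{(\sinh(x/p))^{n-1}}{(\sinh((p-1)x/p))^{n-1}}
\]
is not positive definite, and neither is $h(\mu x)$ with $\mu := \tfrac{p}{p-1}c > 0$. A direct substitution gives
\[
h(\mu x) = \frac{\sinh(\tfrac{p+1}{p-1}cx)}{\sinh(\tfrac{p}{p-1}cx)}\,\prod_{i=2}^n \frac{\sinh(\tfrac{c}{p-1}x)}{\sinh(cx)},
\]
where the choice $\mu = \tfrac{p}{p-1}c$ is exactly what makes the $n-1$ ``small'' denominators of $h(\mu x)$ equal $\sinh(cx)$. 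The $n$-fold products $g$ and $h(\mu x)$ then satisfy the hypothesis of Lemma 2.4(2) factor by factor: by the choice of $p$, every numerator of $g$ dominates the corresponding numerator of $h(\mu x)$, and every denominator of $g$ (uniformly $\sinh(cx)$) is bounded above by the corresponding denominator of $h(\mu x)$ ($\sinh(\tfrac{p}{p-1}cx)$ in the first factor, $\sinh(cx)$ in the others). Hence $g$ is not positive definite, and so neither is $f$.

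The main obstacle is the rigidity of Lemma 2.5, whose specific $(p\pm 1)/p$ scalings have to be matched to the arbitrary parameters $a_i, b_i$. The preparatory multiplication by $\psi$ collapses all $b_i$ to the common scale $c$, and the subsequent dilation $\mu = \tfrac{p}{p-1}c$ aligns the $n-1$ ``small'' denominators of Lemma 2.5 with $\sinh(cx)$; only after these two adjustments does the Lemma 2.4(2) comparison fall into place for a suitably large prime $p$.
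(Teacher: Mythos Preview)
Your proof is correct and follows essentially the same route as the paper: reduce to a comparison with the Lemma~2.5 function via Lemma~2.4(2) after choosing a large prime $p$ and rescaling by $\tfrac{p}{p-1}c$. Your preliminary multiplication by $\psi$ to collapse all $b_i$ to $c$ is just a repackaging of the paper's normalization $\max_i b_i = 1$ together with its final use of $b_i \le 1$ in Lemma~2.4(2); the paper also inserts one extra Lemma~2.4(2) step (replacing the lone $\sinh x$ denominator by $\sinh\tfrac{p-1}{p}x$) that your version absorbs into the single comparison at the end.
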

\begin{proof}
We may assume that $a_1\ge a_2 \ge \ldots \ge a_n$, $b_1=1\ge b_2 \ge \ldots \ge b_n$, and $a_1>1$.
We can choose a prime number $p$ such that 
\[  p> 3, \quad a_1>\frac{p+1}{p-1}, \text{ and } a_n> \frac{1}{p-1} .  \]
By Lemma 2.5 and Lemma 2.4(2), we have
\[  \frac{ (\sinh \frac{p+1}{p}x) (\sinh \frac{x}{p})^{n-1} }{ (\sinh \frac{p-1}{p}x )^n}  \]
is not positive definite, because  so is
\[  \frac{ (\sinh \frac{p+1}{p}x) (\sinh \frac{x}{p})^{n-1} }{ (\sinh x) (\sinh \frac{p-1}{p} x)^{n-1} }  \]
and $(p-1)/p<1$.
Substituting $\frac{p}{p-1}x$ for $x$, we have
\[   \frac{ (\sinh \frac{p+1}{p-1}x) (\sinh \frac{x}{p-1})^{n-1} }{ (\sinh x)^n }  \]
is not positive definite.
Using Lemma 2.4(2), 
\[   \prod_{i=1}^n \frac{\sinh a_i x}{\sinh b_i x}   \]
is not positive definite, since $a_1>\frac{p+1}{p-1}$, $a_2\ge \ldots \ge a_n >\frac{1}{p-1}$ and $1=b_1\ge b_2 \ge \ldots  \ge b_n$.
\end{proof}

\vspace{5mm}

For $n$-tuples of positive numbers $a=(a_1,a_2,\ldots,a_n)$ and $b=(b_1,b_2,\ldots,b_n)$,
we call that $a$ is weakly submajorised by $b$ ($a \preceq_w b$) if there exists permutations $\sigma, \tau$ on $\{ 1,2,\ldots,n \}$ satisfying
\begin{gather*}
  a_{\sigma(1)}\ge  a_{\sigma(2)}\ge \ldots \ge a_{\sigma(n)}, \;
  b_{\tau(1)}\ge b_{\tau(2)} \ge \ldots \ge b_{\tau(n)}  \\
  \text{and } \sum_{i=1}^k a_{\sigma(i)} \le \sum_{i=1}^k b_{\tau(i)} \; 
  \text{ for any } k \in \{1,2,\ldots, n\}.
\end{gather*} 

\vspace{5mm}

%%%%%  Theorem 2.8
%%%%%

\begin{theorem}
Let $a_i, b_i>0$  $(i=1,2,\ldots,n)$.
If $(a_1,a_2,\ldots , a_n)\preceq_w (b_1,b_2, \ldots, b_n)$, then
\[  \prod_{i=1}^n \frac{\sinh a_i x}{\sinh b_i x}   \]
is infinitely divisible.
\end{theorem}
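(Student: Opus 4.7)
The plan is to combine the two-variable result (Lemma 2.3), the coordinate-monotonicity lemma (Lemma 2.4(1)), and a T-transform decomposition from classical majorization theory. First I would reindex so that both $(a_i)$ and $(b_i)$ are in decreasing order; this is legitimate because the function depends only on the multisets of numerators and denominators.

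The first reduction is from weak submajorization to strong majorization (equality of sums). A standard fact from majorization theory says that whenever $(a_1,\ldots,a_n)\preceq_w (b_1,\ldots,b_n)$ with both sides positive and decreasing, one can find a decreasing tuple $(b_1^*,\ldots,b_n^*)$ of positive numbers with $b_i^*\le b_i$, $\sum_{i=1}^n b_i^* = \sum_{i=1}^n a_i$, and $\sum_{i=1}^k a_i\le \sum_{i=1}^k b_i^*$ for every $k$, i.e.\ $a$ is strongly majorized by $b^*$. By Lemma 2.4(1), increasing the denominators back from $b_i^*$ to $b_i$ preserves infinite divisibility, so it suffices to prove the theorem under the extra assumption $\sum_{i=1}^n a_i = \sum_{i=1}^n b_i$.

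In that strong-majorization case I would invoke the classical T-transform representation: there exist intermediate decreasing tuples $b^*=c^{(0)},c^{(1)},\ldots,c^{(m)}=a$ such that each $c^{(j)}$ is obtained from $c^{(j-1)}$ by a single T-transform, i.e.\ the tuples agree outside two indices $p,q$ where
\[
  (c^{(j)}_p,c^{(j)}_q)=(\lambda c^{(j-1)}_p+(1-\lambda)c^{(j-1)}_q,\;(1-\lambda)c^{(j-1)}_p+\lambda c^{(j-1)}_q)
\]
for some $\lambda\in(0,1)$. Telescoping,
\[
  \prod_{i=1}^n \frac{\sinh a_i x}{\sinh b_i^* x}=\prod_{j=1}^m\prod_{i=1}^n\frac{\sinh c^{(j)}_i x}{\sinh c^{(j-1)}_i x},
\]
and because $c^{(j)}_i=c^{(j-1)}_i$ outside $\{p,q\}$, each inner product collapses to a two-variable ratio $\sinh(\alpha x)\sinh(\gamma x)/(\sinh(\beta x)\sinh(\delta x))$ with $\alpha+\gamma=\beta+\delta$ and $\delta>\max\{\alpha,\beta,\gamma\}$ by construction of the T-transform. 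Lemma 2.3 then yields infinite divisibility of each such factor, and infinite divisibility is closed under products (apply Lemma 2.1(3) to $r$-th powers). Composing the two reductions finishes the proof.

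The main obstacle I anticipate is the combinatorial step of producing the T-transform decomposition while keeping every intermediate tuple decreasing and entrywise positive. The classical Hardy--Littlewood--P\'olya theorem supplies this, but a direct induction on $n$, in which one locates the largest index where $a$ and the current $c^{(j-1)}$ disagree and applies a single T-transform there, would make the argument self-contained. Once that step is established, checking the strict inequality $\delta>\max\{\alpha,\beta,\gamma\}$ needed for Lemma 2.3 at each stage is immediate, with the boundary case $\lambda\in\{0,1\}$ contributing only a trivial factor of $1$.
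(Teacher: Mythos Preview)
Your argument is correct and reaches the same conclusion, but it is organised differently from the paper's proof. The paper proceeds by a direct induction on $n$: assuming both tuples are decreasing, if $a_i\le b_i$ for every $i$ one is done by Lemma~2.2; otherwise one locates the first index $j$ with $a_j>b_j$, peels off the single factor
\[
\frac{\sinh a_jx\,\sinh(b_{j-1}+b_j-a_j)x}{\sinh b_{j-1}x\,\sinh b_jx}
\]
(which is infinitely divisible by Lemma~2.3 since $b_{j-1}\ge a_{j-1}\ge a_j>b_j$), and observes that what remains is an $n$-term product whose numerator/denominator tuples again satisfy the weak submajorisation hypothesis. Thus the paper's induction reduces the \emph{length} of the product at each step and never separates the weak case from the strong one.

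Your route instead keeps the length fixed: you first pass from weak to strong majorisation via Lemma~2.4(1), and then invoke the Hardy--Littlewood--P\'olya $T$-transform decomposition so that the whole product telescopes into two-term factors governed by Lemma~2.3. This is a genuinely different decomposition. Its advantage is conceptual: it makes transparent that the result is exactly the infinite-divisibility counterpart of the classical $T$-transform theorem, and each step is a bona fide Lemma~2.3 factor. The paper's advantage is that it is entirely self-contained (no appeal to outside majorisation machinery) and handles the weak case in one stroke without the preliminary reduction. For the first reduction, note that the standard form of the characterisation ``$a\preceq_w b$ iff there exists $c$ with $a\le c$ entrywise and $c\prec b$'' is the version usually stated in the literature; using that $c$ in the numerators and then decreasing from $c$ to $a$ via Lemma~2.4(1) is slightly cleaner than producing your $b^*\le b$, and avoids having to argue positivity of $b^*$.
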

\begin{proof}
It suffices to show that the function
\[   \prod_{i=1}^n \frac{\sinh a_i x}{\sinh b_i x}  \]
is infinitely divisible if $a_1\ge a_2 \ge \ldots \ge a_n$, $b_1\ge b_2 \ge \ldots \ge b_n$,
and $\sum_{i=1}^k a_i \le \sum_{i=1}^k b_i$ for all $k \in \{1,2,\ldots, n\}$.
We prove this statement using induction on $n$.

When $n=1$, it follows from Lemma 2.2 since $a_1\le b_1$.

We assume the statement is valid for some $n$
and $a_1\ge a_2 \ge \ldots \ge a_n \ge a_{n+1}$, $b_1\ge b_2 \ge \ldots \ge b_n\ge b_{n+1}$,
and $\sum_{i=1}^k a_i \le \sum_{i=1}^k b_i$ for all $k \in \{1,2,\ldots, n+1\}$.
If $a_i \le b_i$ for all $i=1,2,\ldots, n+1$, then 
\[    \prod_{i=1}^{n+1} \frac{\sinh a_i x}{\sinh b_i x}  \]
is infinitely divisible by Lemma 2.1(3) and Lemma 2.2.
If $a_j>b_j$ for some $j$, then we may assume that
\[  a_k \le b_k \; (k=1,2,\ldots,j-1) \text{ and } a_j>b_j.  \]
We have
\begin{align*}
    \prod_{i=1}^{n+1} \frac{\sinh a_i x}{\sinh b_i x} =
     \prod_{i=1}^{j-2} \frac{\sinh a_i x}{\sinh b_i x} & \times \frac{\sinh a_{j-1} x}{\sinh (b_{j-1}+b_j -a_j)x}
     \times \prod_{i=j+1}^{n+1} \frac{\sinh a_i x}{\sinh b_i x}  \\
    & \times \frac{\sinh a_j x \sinh (b_{j-1}+b_j-a_j)x}{\sinh b_{j-1}x \sinh b_j x} .
\end{align*}
By the assumption of induction, we can see
\[  \prod_{i=1}^{j-2} \frac{\sinh a_i x}{\sinh b_i x} \times \frac{\sinh a_{j-1} x}{\sinh (b_{j-1}+b_j -a_j)x}
     \times \prod_{i=j+1}^{n+1} \frac{\sinh a_i x}{\sinh b_i x}  \]
is infinitely divisible and by Lemma 2.3
\[  \frac{\sinh a_j x \sinh (b_{j-1}+b_j-a_j)x}{\sinh b_{j-1}x \sinh b_j x}  \]
is also infinitely divisible since $b_{j-1} \ge \max\{a_j, b_{j-1}+b_j-a_j\}$ and $b_{j-1}+b_j = a_j +(b_{j-1}+b_j-a_j)$.
By Lemma 2.1 we can prove
\[  \prod_{i=1}^{n+1} \frac{\sinh a_i x}{\sinh b_i x}  \]
is infinitely divisible.
\end{proof}

\vspace{5mm}

%%%%%  Example 2.9
%%%%%

\begin{example}
Let $a_1=8$, $a_2=6$, $a_3=3$, $b_1=9$, $b_2=4$, $b_3=4$.
It does not satisfies the assumption of Proposition 2.6 and Theorem 2.8.
We can show the function
\[   f(x) = \frac{\sinh 8x \sinh 6x \sinh 3x}{\sinh 9x \sinh 4x \sinh 4x}  \]
is not positive definite.
\end{example}

We remark that $f(0)=1$ and $f(x)=f(-x)$.
We can get the following approximation values:
\begin{gather*}
    |f(1/3) -0.9780192940|\le 10^{-10}, \; |f(2/3)-0.9908829679|\le 10^{-10}, \\
    \text{and }    |f(1) - 0.9981846167|\le 10^{-10} .  
\end{gather*}
Since $|f(0)|, |f(1/3)|, |f(2/3)|, |f(1)|\le 1$, we can get the following estimation:
\[   | \det \begin{pmatrix} f(0) & f(1/3) & f(2/3) & f(1) \\ f(1/3) & f(0) & f(1/3) & f(2/3) \\
                                         f(2/3) & f(1/3) & f(0) & f(1/3) \\ f(1) & f(2/3) & f(1/3) & f(0) \end{pmatrix} - (-0.0000095)| \le 10^{-7}  \]
by using these approximation values.
This means that the $4\times 4$ matrix $( f(\frac{i-j}{3} ) )_{i,j=1}^4$ is not positive. 
So we have $f$ is not positive definite.

\vspace{5mm}

%%%%%  Example 2.10
%%%%%

\begin{example}
Let $a_1=8$, $a_2=6$, $a_3=1$, $b_1=9$, $b_2=4$, $b_3=4$.
It also does not satisfies the assumption of Proposition 2.6 and Theorem 2.8.
But we can show the function
\[   \frac{\sinh 8x \sinh 6x \sinh x}{\sinh 9x \sinh 4x \sinh 4x}  \]
is infinitely divisible.
\end{example}

We also use the following integral expression:
\[  \log \frac{b\sinh ax}{a\sinh bx} = \int_{-\infty}^\infty (e^{ixt}-1-ixt)
           \frac{\sinh((1/a-1/b)\pi t/2)}{2t\sinh(\pi t/2a)\sinh(\pi t/2b)}dt  \]
by Kosaki(\cite{kosaki}: Corollary 3).
Since
\[  \log \frac{3\sinh 8x \sinh 6x \sinh x}{\sinh 9x \sinh 4x \sinh 4x}
   = \log \frac{9\sinh 8x}{8\sinh 9x} -\log \frac{6\sinh 4x}{4\sinh 6x} + \log \frac{4\sinh x}{\sinh 4x}, \]
we have
\[  \log \frac{3\sinh 8x \sinh 6x \sinh x}{\sinh 9x \sinh 4x \sinh 4x}
    = \int_{-\infty}^\infty \frac{e^{ixt}-1-ixt}{t^2} F(t)dt, \]
where $F(t) = f_1(t)-f_2(t)+f_3(t)$ and $f_i$'s are non-negative integrable functions as follows:
\begin{gather*} f_1(t) = \frac{t^2\sinh (\pi t/144)}{2t \sinh (\pi t/16) \sinh (\pi t/18) } , \quad
    f_2(t) = \frac{t^2\sinh (\pi t/24)}{2t \sinh (\pi t/12) \sinh (\pi t/8) }, \\
    f_3(t) = \frac{t^2\sinh (3\pi t/8)}{2t \sinh (\pi t/2) \sinh (\pi t/8) }.  
\end{gather*}
We set
\[  F(t) = \frac{t^2g(t)}{2t\sinh (\pi t/18)\sinh(\pi t/16)\sinh(\pi t/12)\sinh(\pi t/8)\sinh(\pi t/2)} , \]
where
\begin{align*}
  g(t) =& \sinh(\frac{\pi t}{144})\sinh(\frac{\pi t}{12})\sinh(\frac{\pi t}{8})\sinh(\frac{\pi t}{2}) \\
   &  -\sinh(\frac{\pi t}{24})\sinh(\frac{\pi t}{16})\sinh(\frac{\pi t}{18})\sinh(\frac{\pi t}{2}) \\
   &  +\sinh(\frac{3\pi t}{8})\sinh(\frac{\pi t}{16})\sinh(\frac{\pi t}{18})\sinh(\frac{\pi t}{12}).
\end{align*}
If we show that $g(t)\ge 0$ for all $t \in \mathbb{R}$, then so is $F(t)$. 
This implies the infinite divisibility of the function
\[  \frac{\sinh 8x \sinh 6x \sinh x}{\sinh 9x \sinh 4x \sinh 4x}.  \]
           
By the formulas 
\begin{gather*}  \sinh a \sinh b = \frac{1}{2} (\cosh (a+b)- \cosh (a-b)), \\
     \cosh a \cosh b = \frac{1}{2} (\cosh (a+b) + \cosh (a-b)), 
\end{gather*} 
we have
\begin{align*}
     \sinh a & \sinh b \sinh c \sinh d \\
  = \frac{1}{8} & ( \cosh (a+b+c+d) + \cosh (a+b-c-d)  \\
       & - \cosh (a-b+c+d) - \cosh (a-b-c-d) \\
       & - \cosh (a+b+c-d) - \cosh (a+b-c+d) \\
       & + \cosh (a-b+c-d) +\cosh (a-b-c+d) ). 
\end{align*}
Using this relation, we can get
\begin{align*}
  g(144s/\pi) =& (\sinh(s)\sinh(12s)\sinh(18s)\sinh(72s) \\
   & \qquad -\sinh(6s)\sinh(9s)\sinh(8s)\sinh(72s) \\
   & \qquad +\sinh(54s)\sinh(9s)\sinh(8s)\sinh(12s)  ) \\
   =& \frac{1}{8} (\cosh 103s + 2\cosh 83s +2\cosh 77s +2\cosh 49s +2\cos 43s \\
   & \qquad -\cosh 101s-\cosh 95s -3\cosh 67s -\cosh 65s \\
   & \qquad -\cosh 61s -\cosh 59s -\cosh 25s ) \\
   =& \frac{1}{8} \sum_{k=0}^\infty \frac{c_k}{(2k)!}s^{2k} ,
\end{align*}
where
\begin{align*}
  c_k =&  103^{2k} +2\cdot 83^{2k}+2\cdot 77^{2k} + 2\cdot 49^{2k} + 2\cdot 43^{2k} \\
   & -101^{2k}-95^{2k}-3\cdot 67^{2k}-65^{2k}-61^{2k}-59^{2k} -25^{2k}.
\end{align*}
Since
\begin{align*}
  \frac{c_9}{103^{18}} & \ge 1-(\frac{101^{18}+95^{18}+3\cdot 67^{18}
     +65^{18}+61^{18}+59^{18}+25^{18}}{103^{18}} ) (\approx 0.062)  \\
    &\ge 0,
\end{align*}
we have $c_k >0$ if $k\ge 9$.
By the direct computation, we can get
\[  c_0, c_1, c_2, \ldots , c_8 \ge 0.  \]
So $g(t)$ is non-negative for all $t\in\mathbb{R}$.

\vspace{5mm}

%%%%%  Remark
%%%%%

\noindent
{\bf Remark.}  We have already shown in Lemma 2.3 that the function
\[  f(x) = \frac{bd \sinh ax \sinh cx}{ac\sinh bx \sinh dx}  \]
is infinitely divisible for any positive numbers $a,b,c,d$ with $b>\max\{a,c\}$ and
$a+c=b+d$.
As stated in \cite{kosaki}:Theorem 5,   the density function $F$ appeared 
in the integral expression as below 
becomes even, positive and integrable (i.e., $F(t)$  admits a finite limit at the origin and rapidly decreasing at $\infty$) : 
\begin{gather*}
    \log f(x)  = \int_{-\infty}^\infty (e^{ixt}-1-ixt)F(t) dt    \\
\intertext{ and}
    F(t)  = \frac{\sinh ((b-a)\pi t/(2ab)) }{ 2t\sinh (\pi t/2a) \sinh(\pi t/2b) }
       - \frac{\sinh ((c-d) \pi t/ (2cd)) } {2t\sinh (\pi t/2c) \sinh(\pi t/2d)}  .
\end{gather*}
 
When $n\ge 2$ and two sequences $\alpha =(a_1,a_2, \ldots,a_n)$ and $\beta = (b_1,b_2, \ldots, b_n)$ 
of positive numbers satisfy the following condition:
\[  \sum_{i=1}^k a_i \le \sum_{i=1}^k b_i \; (k=1,2,\ldots,n-1) \text{ and }
     \sum_{i=1}^n a_i = \sum_{i=1}^n b_i,  \]
the function
\[  g(x) = \prod_{i=1}^n \frac{b_i\sinh a_i x}{a_i \sinh b_i x}  \]
is also infinitely divisible by Theorem 2.8.
By the argument in the proof of Theorem 2.8, we can see that $g(x)$ is written by the product  
$f_1(x)f_2(x)\cdots f_{n-1}(x)$ of $f_1(x)$, $f_2(x)$, $\ldots$, and $f_{n-1}(x)$, where each $f_i(x)$ has the form 
\[  f(x) = \frac{bd \sinh ax \sinh cx}{ac\sinh bx \sinh dx} \quad
    ( b>\max\{a,c\} \text{ and } a+c=b+d). \]
For examples, we have the following expressions:
\begin{gather*}
    \frac{\sinh 6x \sinh 5x \sinh 3x}{\sinh 9x \sinh 4x \sinh x} =
    \frac{\sinh 6x \sinh 3x}{\sinh 8x \sinh x}\times \frac{\sinh 5x \sinh 8x}{\sinh 9x \sinh 4x}, \\
    \frac{\sinh 7x \sinh 5x \sinh 4x}{\sinh 9x \sinh 6x \sinh x} = 
    \frac{\sinh 7x \sinh 5x}{\sinh 9x \sinh 3x} \times \frac{\sinh 4x \sinh 3x}{\sinh 6x \sin x}. 
\end{gather*}
This means that, the density function $G$ appeared in the integral expression as below  is 
also even, positive and integrable:
\[  \log g(x) = \int_{-\infty}^\infty (e^{ixt}-1-ixt)G(t) dt.  \]
Since $\int_{-\infty}^\infty tG(t) dt =0$,  the function $g(x)^r$ has the following form for any $r>0$:
\[  g(x)^r = e^{r\varphi(x)}C^r  ,  \]
where $\varphi (x) =\hat{G}(x)= \int_{-\infty}^\infty e^{ixt}G(t)dt$ and $C=\exp(-\int_{-\infty}^\infty G(t)dt)$.
We can also see that $g(x)$ is infinitely divisible by Lemma 2.1(4).

%%%%%
%%%%%  Section 3
%%%%%

\section{Proof of Theorems and Applications}

For $a,b\in \mathbb{R}$, we define
\[   f_{a,b}(t) = t^{\gamma(a,b)}\frac{b(t^a-1)}{a(t^b-1)} , \quad \gamma(a,b)=\frac{1 -a+b}{2}, \]
where we use the notation $(t^a-1)/a = \log t$ if $a=0$.
Then the function $f_{a,b}:(0,\infty)\longrightarrow (0,\infty)$ is continuous with $f_{a,b}(1)=1$ 
(i.e., $f_{a,b}\in C(0,\infty)^+_1$) and symmetric ($f_{a,b}(t) = tf_{a,b}(1/t)$).
It is clear that
\[  f_{a,b}(t) = f_{-a,b}(t) = f_{a,-b}(t) =f_{-a,-b}(t) \text{ and } f_{a,a}(t)=\sqrt{t}.  \]
So we only consider the case $a,b\ge0$.

For $\alpha=(a_1,a_2,\ldots,a_n)$, $\beta =(b_1,b_2,\ldots, b_n) \in \mathbb{R}^n$,
we define the function as follows:
\[    f_{\alpha, \beta}(t) = t^{\gamma(\alpha, \beta)}\prod_{i=1}^n \frac{b_i(t^{a_i}-1)}{a_i(t^{b_i}-1)}, \]
where $\gamma= (1-\sum_{i=1}^n (a_i-b_i) )/2$ and we also use the notation
$(t^a-1)/a=\log t$ if $a=0$.
Then the function $f_{\alpha,\beta}$ also satisfies $f_{\alpha,\beta}\in C(0,\infty)^+_1$ and $f_{\alpha,\beta}(t)=tf_{\alpha,\beta}(1/t)$.
If we define $\tilde{\alpha} =(-a_1,a_2,\ldots,a_n)$, that is, $\tilde{\alpha}$ is replaced $a_1$ by $-a_1$ in $\alpha$,
then we have
\begin{align*}
  f_{\tilde{\alpha},\beta}(t) & = t^{\gamma(\tilde{\alpha}, \beta)} \frac{b_1(t^{-a_1}-1)}{(-a_1)(t^{b_1}-1)}
                                 \prod_{i=2}^n \frac{b_i(t^{a_i}-1)}{a_i(t^{b_i}-1)}  \\
  & = t^{\gamma(\tilde{\alpha}, \beta)} t^{-a_1} \frac{b_1(t^{a_1}-1)}{a_1(t^{b_1}-1)}
                                 \prod_{i=2}^n \frac{b_i(t^{a_i}-1)}{a_i(t^{b_i}-1)}  \\
  & = t^{\gamma(\alpha, \beta)}\prod_{i=1}^n \frac{b_i(t^{a_i}-1)}{a_i(t^{b_i}-1)} =  f_{\alpha,\beta}(t).
\end{align*}
This means $f_{\alpha, \beta}(t) = f_{|\alpha|, |\beta|}(t)$, where 
 $|\alpha| = (|a_1|, |a_2|, \ldots, |a_n|)$.

\vspace{5mm}

%%%%%  Proof of Theorem 1.1
%%%%%

\noindent
{\bf Proof of Theorem 1.1} \; For $\alpha,\beta\in \mathbb{R}^n$ and 
$\alpha', \beta' \in \mathbb{R}^m$, it suffices to show that the function
\[  h: \mathbb{R} \ni x \mapsto \frac{f_{\alpha,\beta}(e^{2x})}{f_{\alpha',\beta'}(e^{2x})} \in (0,\infty) \]
is positive definite.
By Lemma 2.1(2) and the fact $f_{\alpha,\beta} = f_{|\alpha|, |\beta|}$, we may assume that
each component of $\alpha,\beta,\alpha'$ and $\beta'$ is positive.

By the calculation
\begin{align*}
   f_{\alpha,\beta}(e^{2x}) & = e^{2\gamma(\alpha,\beta)x}\prod_{i=1}^n 
         \frac{b_i(e^{2a_ix}-1)}{a_i(e^{2b_ix}-1)}  \\
      & = e^{2\gamma(\alpha,\beta)x} e^{(\sum_{i=1}^n (a_i-b_i))x} \prod_{i=1}^n
         \frac{ b_i(e^{a_ix}-e^{-a_ix}) }{ a_i(e^{b_ix}-e^{-b_ix}) }  \\
      & = e^x \prod_{i=1}^n \frac{b_i\sinh a_ix}{a_i\sinh b_ix},
\end{align*}
the function $h(x)$ has the following form:
\[  h(x) = \prod_{i=1}^n \frac{b_i\sinh a_ix}{a_i\sinh b_ix}
             \prod_{j=1}^m \frac{c_j\sinh d_jx}{d_j\sinh c_jx}  .  \]
By Theorem 2.8, $h(x)$ is infinitely divisible, in particular positive definite if $(a_1,a_2,\ldots,a_n,d_1,d_2,\ldots,d_m) \preceq_w
(b_1,b_2,,\ldots, b_n,c_1,c_2,\ldots,c_m)$. \hfill $\Box$

\vspace{5mm}

For $\alpha, \beta\in \mathbb{R}^n$, we set $M_{\alpha, \beta}(s,t)=t f_{\alpha,\beta}(s/t)$.
Then $M_{\alpha, \beta}(s,t)$ can be written as follows:
\begin{align*}
    M_{\alpha, \beta}(s,t) & =(st)^{\gamma(\alpha, \beta)} 
               \prod_{i=1}^n \frac{b_i(s^{a_i}-t^{a_i})}{a_i(s^{b_i}-t^{b_i})}  \\ 
      & = (st)^{1/2} \prod_{i=1}^n 
              \frac{b_i \sinh (a_i(\log s-\log t)/2)}{ a_i \sinh (b_i (\log s-\log t)/2)}. 
\end{align*}
Let $k$ be a positive integer smaller than $n$.
For $1\le i_1 <i_2<\cdots <i_k\le n$, we define 
$\alpha\setminus (i_1, i_2,\ldots,i_k) \in \mathbb{R}^{n-k}$ by deleting the $i_1$-th, $i_2$-th, \ldots
and $i_k$-th components from $\alpha$.
If $\alpha, \beta\in \mathbb{R}^n$ satisfy the relation $|\alpha|\preceq_w |\beta|$, then
we have
\[ \bigl( f_{\alpha\setminus(i_1,i_2,\ldots,i_k), \beta\setminus(j_1,j_2,\ldots,j_k)}(t)\bigr)^r
    \preceq \bigl( f_{(b_{j_1}, b_{j_2},\ldots,b_{j_k}), (a_{i_1}, a_{i_2}, \ldots, a_{i_k})}(t)\bigr)^r  \]
for any $r>0$ by Theorem 1.1, where it is also assumed $1\le j_1<j_2<\cdots <j_k\le n$.
In the case $r=1$,  we can get the following operator norm inequality:
\begin{align*}
     ||| M_{\alpha\setminus(i_1,i_2,\ldots,i_k), \beta\setminus(j_1,j_2,\ldots,j_k)}&(L_H,R_K)X|||  \\
    \le &  ||| M_{(b_{j_1}, b_{j_2},\ldots,b_{j_k}), (a_{i_1}, a_{i_2}, \ldots, a_{i_k})}(L_H,R_K)X |||  
\end{align*}
for any $H,K,X\in \mathbb{M}_N(\mathbb{C})$ with $H,K>0$.

As an example, we consider $\alpha = (8,8,7,5,3)$ and $\beta = (10,9,6,4,2) \in \mathbb{R}^5$.
It is clear $\alpha \preceq_w \beta$.
%If we choose as $i_1=2$ and $j_1=3$, then we have
%\[  ||| M_{(8,7,5,3),(10,9,4,2)}(L_H,R_K)X ||| \le
 %    ||| M_{(6).(8)}(L_H, R_K)X |||. \]
If we choose as $i_1=1, i_2=4, j_1=2$ and $j_2=5$, then we  have
\[   ||| M_{(8,7,3),(10,6,4)}(L_H,R_K)X ||| \le
      ||| M_{(9,2),(8,5)}(L_H,R_K)X|||.  \]  
By using our method,  if we choose $i_1=2$ and $j_1=2$ for $\alpha =(1,1)\preceq_w (1,2)=\beta$,
then we can get McIntosh's inequality
\[  ||| H^{1/2}XK^{1/2} ||| \le \frac{1}{2} |||HX+XK ||| \]
for all $H,K,X\in \mathbb{M}_N(\mathbb{C})$ with $H, K>0$,
because $t^{1/2} = f_{(1),(1)}(t) \preceq f_{(2),(1)}(t) = (1+t)/2$.

\vspace{5mm}

%%%%%  Proof of Theorem 1.2
%%%%%

\noindent
{\bf Proof of Theorem 1.2} \; At first, we show that, for $a_1\ge a_2 > 0$ and $b_1\ge b_2>0$,
\[  f_{a_1,b_1} \preceq f_{b_2,a_2} \Leftrightarrow \; a_1 \le b_1 \text{ and } a_1+a_2 \le b_1+b_2 . \]
We have already seen that $f_{a_1,b_1} \preceq f_{b_2,a_2}$ is equivalent to 
\[  \frac{\sinh a_1x \sinh a_2x}{\sinh b_1x \sinh b_2 x}  \]
is positive definite.
The implication $(\Leftarrow)$ follows from Theorem 2.8.
If $a_1>b_1$ or $a_1+a_2>b_1+b_2$, $\frac{\sinh a_1x \sinh a_2x}{\sinh b_1x \sinh b_2 x} $ is not
positive definite by Proposition 2.7 and Proposition 2.6.
So the reverse implication $(\Rightarrow)$ is valid.

(1) When $a\ge b$, we have
\begin{align*}
  f_{a,b}\preceq f_{c,d} & \Leftrightarrow 
        \frac{\sinh ax \sinh dx}{\sinh bx \sinh cx} \text{ is positive definite}. \\
        & \Leftrightarrow  a\le c \text{ and } a+d \le b+c .  \\
        & \Leftrightarrow  (c,d) \in \{(x,y) : x \ge a, 0 \le y \le x-a+b \}.
\end{align*}

(2) When $a < b$, we have
\begin{align*}
  f_{a,b}\preceq f_{c,d} & \Leftrightarrow 
        \frac{\sinh ax \sinh dx}{\sinh bx \sinh cx} \text{ is positive definite}. \\
        & \Leftrightarrow  (d \le c) \text{ or } (d\le b \text{ and }a+d \le b+c).  \\
        & \Leftrightarrow  (c,d) \in \{(x,y): 0\le y\le x\}  \\
        & \qquad \qquad \qquad \cup \{(x,y) : 0\le x \le y \le x-a+b, y\le b\}. \qquad \Box
\end{align*}

\vspace{1cm}
\begin{center}
\begin{tabular}{l l}
Imam Nugraha Albania  &   [ Current Address ]  \\
Department of Mathematics Education  &  Graduate School of Science   \\
Universitas Pendidikan Indonesia (UPI)  &  Chiba University \\
Jl. Dr Setia Budhi 229 Bandung 40154    &  Chiba 263-8522\\
Indonesia   & Japan \\
e-mail address: albania@upi.edu   & e-mail address: phantasion@gmail.com  \\
  &  \\
  &  \\
Masaru NAGISA &  \\
Graduate School of Science & \\
Chiba University & \\
Chiba 263-8522 & \\
Japan & \\
e-mail address: nagisa@math.s.chiba-u.ac.jp & 
\end{tabular}
\end{center}

\vspace{5mm}

{\it MSC} primary: 47A53, 47A64,  secondary: 15A42, 15A45, 15A60

\vspace{5mm}

{\it Keywords:}  McIntosh's inequality; Operator norm inequality;Unitarily invariant norm;Positive definite function; Infinitely divisible function

\end{document}